\def\input@path{{/HOME1/users/personal/rubsc/Dropbox/}}
\theoremstyle{plain}
\newtheorem{thm}{\protect\theoremname}
  \theoremstyle{remark}
  \newtheorem{rem}[thm]{\protect\remarkname}
  \theoremstyle{plain}
  \newtheorem{prop}[thm]{\protect\propositionname}
  \theoremstyle{definition}
  \newtheorem{defn}[thm]{\protect\definitionname}
  \theoremstyle{definition}
  \newtheorem{example}[thm]{\protect\examplename}
  \theoremstyle{plain}
  \newtheorem{cor}[thm]{\protect\corollaryname}
\setlist[enumerate,1]{label=(\roman*)}
\DeclareMathOperator{\E}{{\mathds E}}	% Expectation w/o limits; \mathbb, \mathbs
\DeclareMathOperator{\one}{{\mathds 1}} 	% \usepackage[sans]{dsfont}
\DeclareMathOperator{\pr}{\mathsf{pr}} 	% \usepackage[sans]{dsfont}
\DeclareMathOperator{\AVaR}{\mathsf{AV@R}}	% Value-at-Risk
\DeclareMathOperator{\nAVaR}{\mathsf{nAV@R}}	% Value-at-Risk
\DeclareMathOperator{\nd}{\mathsf{d\kern-.15ex I}}	% nested distance	\mathbf d
\DeclareMathOperator*{\essinf}{ess\,inf}
\DeclareMathOperator*{\esssup}{ess\,sup}
\theoremstyle{definition}
\newtheorem{conv}[thm]{Convention}
  \providecommand{\corollaryname}{Corollary}
  \providecommand{\definitionname}{Definition}
  \providecommand{\examplename}{Example}
  \providecommand{\propositionname}{Proposition}
  \providecommand{\remarkname}{Remark}
\providecommand{\theoremname}{Theorem}
\begin{document}

\title{\textbf{Martingale Characterizations Of Risk-Averse Stochastic Optimization
Problems}}

\author{Alois Pichler\thanks{Contact: \protect\href{mailto:alois.pichler@math.tu-chemnitz.de}{alois.pichler@math.tu-chemnitz.de}}
\and  Ruben Schlotter\thanks{Both authors: Technische Universität Chemnitz, 09126 Chemnitz, Germany}}
\maketitle
\begin{abstract}
This paper addresses risk awareness of stochastic optimization problems.
Nested risk measures appear naturally in this context, as they allow
beneficial reformulations for algorithmic treatments. The reformulations
presented extend usual Hamilton\textendash Jacobi\textendash Bellman
equations in dynamic optimization by involving risk awareness in the
problem formulation. 

Nested risk measures are built on risk measures, which originate by
conditioning on the history of a stochastic process. We derive martingale
properties of these risk measures and use them to prove continuity.
It is demonstrated that stochastic optimization problems, which incorporate
risk awareness via nesting risk measures, are continuous with respect
to the natural distance governing these optimization problems, the
nested distance. 

\textbf{Keywords:} Risk measures, Stochastic optimization, Stochastic
processes

\textbf{Classification:} 90C15, 60B05, 62P05
\end{abstract}

\section{\label{sec:Introduction}Introduction}

Risk measures have been found useful in various disciplines of applied
mathematics, particularly in mathematical finance and in stochastic
optimization. Many applications involve them in various places to
account for risk. It is hence natural to investigate risk measures
in a multistage or dynamic optimization framework as well. One of
the first occurrences of dynamic risk measures in the literature is~\citet{Riedel2004},
conditional risk measures are discussed in~\citet{Ruszczynski} (consider
also the references therein). 

It seems that there is no general consensus on how to incorporate
risk measures in a more general framework which involves time. One
of the conceptual difficulties arising in a problem setting involving
time is time consistency. In short, the decisions considered optimal
at some stage of time should not be rejected from a later perspective. 

Risk-averse multistage stochastic programs incorporate risk awareness
in multistage decision making. These problems have been considered
in \citet{Ruszczynski2010} and \citet{Dentcheva2017}, while applications
can be found in~\citet{PhilpottMatos,PhilpottMatosFinardi} or \citet{Maggioni2012},
e.g., where stochastic dual dynamic programming methods are addressed,
cf.\ also \citet{Romisch,Leclere2015}. In economics, the spread
between \emph{risk-averse} and \emph{risk-neutral} preferences is
associated with a risk or insurance premium. For this, the prevailing
idea of risk in these papers is the interpretation as insurance on
a rolling horizon basis.

\medskip{}

This paper introduces conditional risk functionals based on the history
of the governing stochastic process. These functionals are nested
to obtain risk functionals accounting for the risk at each stage of
the stochastic process. We elaborate their continuity properties and
for important cases we compare them with simple risk measures spanning
the entire horizon as a whole. 

Building on the idea in~\citet{Pflug2009} we introduce the nested
distance via conditional probabilities. We relate these concepts by
verifying that nested risk functionals are continuous with respect
to the nested distance and provide an explicit expression of the modulus
of continuity. 

Martingales are present in stochastic optimization since its very
beginning, cf.\ \citet{Rockafellar1976}. The approach taken here
to verify the results is based on generalized martingales. They reflect
the evolution of risk over time, as risk measures replace risk-neutral
expectations. It is demonstrated that the nested distance, as well
as nested risk measures, follow martingale characteristics in this
generalized sense. 

It is a consequence that risk-averse multistage stochastic programs
are continuous with respect to the nested distance. The optimal solutions
constitute a stochastic process, which again follows a martingale-like
pattern. We finally give a verification theorem. This is a risk-averse
generalization of Hamilton\textendash Jacobi\textendash Bellman equations,
which are well-known from dynamic optimization. 

\paragraph{Outline of the paper.}

Section~\ref{sec:ConditionalRisk} introduces nested risk functionals
after an introductory discussion (Section~\ref{sec:NestedDistance}).
Section~\ref{sec:CostProcess} addresses the main featurs of the
nested distance which are important and relevant to cover the discussion
on continuity of the multistage stochastic programs in Section~\ref{sec:ContinuityRisk}.
Risk martingales are introduced in Section~\ref{sec:Martingale}.
We conclude with the main result in Section~\ref{sec:Continuity}.

\section{\label{sec:NestedDistance}Notation and preliminaries}

We consider the Polish spaces $\big(\Xi_{t},d_{t}\big)$, $t\in\left\{ 1,\dots,T\right\} $.
We shall associate $t\in\left\{ 1,\dots,T\right\} $ with \emph{stage}
or\emph{ time} advancing in discrete steps from $1$ to $T$, where
$T\in\left\{ 1,2,\dots\right\} $ is the time horizon (terminal time)
or final stage. Each space $\Xi_{t}$, $t=1,\dots T$, contains the
information revealed at time $t$. In what follows it will often be
sufficient to consider the spaces $\Xi_{t}=\mathbb{R}^{m_{t}}$. 

The product $\Xi:=\Xi_{1:T}:=\Xi_{1}\times\dots\times\Xi_{T}$ is
endowed with the metric $d$ and
\begin{equation}
\left(\Xi,\,d\right)\label{eq:2}
\end{equation}
is Polish as well (for example, choose $d(x,y):=\ell_{p}(x,y):=\left(\sum_{t=1}^{T}d_{t}(x_{t},y_{t})^{p}\right)^{\nicefrac{1}{p}}$).
We denote elements $x\in\Xi_{1:T}$ by $x_{1:T}:=x=(x_{1},\dots,x_{T})$
and by $\pr_{t}$ the canonical (i.e., coordinate) projection $\pr_{t}(x_{1:T}):=x_{1:t}$
onto the subspace $\Xi_{1:t}:=\Xi_{1}\times\dots\times\Xi_{t}$. To
allow a compact notation we also introduce the empty tuple $x_{1:0}=()$. 

On the Borel sets $\mathcal{F}_{T}:=\mathscr{B}(\Xi_{1:T})$ we consider
the probability measure 
\[
P\colon\mathcal{F}_{T}\to[0,1].
\]
The probability measures restricted to the sub-sigma algebra $\mathcal{F}_{t}:=\sigma(\pr_{t})$
are the image measures defined by 
\[
P_{t}(A):=P^{\pr_{t}}(A)=P\left(A\times\Xi_{t+1}\times\dots\times\Xi_{T}\right),
\]
where $A\in\mathscr{B}(\Xi_{1:t})$, the Borel sigma algebra on $\Xi_{1:t}$.
The sequence $\mathcal{F}:=\mathcal{F}_{0:T}:=\left(\mathcal{F}_{t}\right)_{t=0}^{T}$
is the canonical (i.e., coordinate) filtration and $(\Xi_{1:T},\mathcal{F}_{0:T},P)$
is a filtered probability space (a.k.a.\ stochastic basis), where
we include the trivial sigma algebra $\mathcal{F}_{0}:=\left\{ \emptyset,\,\Xi_{1:T}\right\} $
for completeness and convenience.

The disintegration theorem (cf.~\citet[III-70]{Dellacherie1978}
or \citet[Section~5.3]{Ambrosi2005}) allows `disintegrating' \label{disintegration}
the probability measure with respect to the coordinates. 
\begin{thm}[Disintegration theorem]
\label{thm:Disintegrate}There is a regular kernel, i.e., a $P_{t}$\nobreakdash-a.s.\ uniquely
defined family of measures $P\left(\cdot|\,x_{1:t}\right)$ so that
\begin{enumerate}[noitemsep, nolistsep]
\item \label{enu:Disintegrate1}$x_{1:t}\mapsto P\left(B\mid x_{1:t}\right)$
is measurable for every $B\in\mathscr{B}(\Xi_{t+1}\times\dots\times\Xi_{T})$
and
\item $P(A\times B)=\int_{A}P\left(B\mid x_{1:t}\right)P_{t}(\mathrm{d}x_{1:t})$,
where $A\in\mathscr{B}(\Xi_{1}\times\dots\times\Xi_{t})$ and $B\in\mathscr{B}(\Xi_{t+1}\times\dots\times\Xi_{T})$.
\end{enumerate}
The conditional probability measures 
\begin{equation}
P_{t+1}(\cdot\mid x_{1:t})\;\text{ on }\;\mathscr{B}(\Xi_{t+1})\label{eq:kernel}
\end{equation}
are called (regular) \emph{kernels} and the substring $x_{1:t}$ is
also called a \emph{fiber}.
\end{thm}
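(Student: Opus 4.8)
The plan is to construct the kernel through the theory of regular conditional probabilities, exploiting the Polish structure of the future coordinate space $\Xi_{t+1:T}:=\Xi_{t+1}\times\dots\times\Xi_T$. First I would note that for every fixed $B\in\mathscr{B}(\Xi_{t+1:T})$ the conditional expectation $\E_P\!\left[\one_{\Xi_{1:t}\times B}\mid\mathcal F_t\right]$ exists and is $\mathcal F_t$-measurable; since $\mathcal F_t=\sigma(\pr_t)$, the Doob--Dynkin lemma yields a Borel function $g_B$ on $\Xi_{1:t}$ with $\E_P[\one_{\Xi_{1:t}\times B}\mid\mathcal F_t]=g_B\circ\pr_t$, and property~(ii) is then exactly the defining identity of the conditional expectation. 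The substantive task is to choose the versions $g_B$ coherently, so that $B\mapsto g_B(x_{1:t})$ is a genuine probability measure for $P_t$-almost every fiber $x_{1:t}$.

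The main obstacle -- and the reason the Polish assumption is indispensable -- is that the a.s.\ identities (countable additivity, continuity from above) which make $B\mapsto g_B(x_{1:t})$ a measure hold, for each single $B$, only outside a null set, and there are uncountably many such $B$. To circumvent this I would reduce to a countable generating family. By Kuratowski's isomorphism theorem the Polish space $\Xi_{t+1:T}$ is Borel isomorphic to a Borel subset of $[0,1]$, so it suffices to build the conditional distribution of the corresponding real-valued coordinate. Setting $H(q,x_{1:t}):=g_{\{\cdot\le q\}}(x_{1:t})$ for $q\in\mathbb Q$, the relations $H(q,\cdot)\le H(q',\cdot)$ for $q\le q'$, together with the correct limits as $q\to\pm\infty$ along the rationals, each hold off a $P_t$-null set; as $\mathbb Q$ is countable their union is still null. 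Off this exceptional set the right-continuous modification $q\mapsto\inf_{q'>q}H(q',x_{1:t})$ is a bona fide cumulative distribution function and hence determines a unique Borel probability measure $P(\cdot\mid x_{1:t})$; on the null set one assigns an arbitrary fixed probability measure.

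It then remains to upgrade this construction from the generating half-lines to all Borel sets, which I would do by a monotone-class argument. I would consider the collection $\mathcal H$ of sets $B\in\mathscr{B}(\Xi_{t+1:T})$ for which $x_{1:t}\mapsto P(B\mid x_{1:t})$ is measurable and satisfies the factorization~(ii) for every $A\in\mathscr{B}(\Xi_{1:t})$. By construction $\mathcal H$ contains the algebra generated by the half-lines, and monotone convergence shows that $\mathcal H$ is a Dynkin system; Dynkin's $\pi$-$\lambda$ theorem then forces $\mathcal H=\mathscr{B}(\Xi_{t+1:T})$, establishing measurability~(i) and the integral identity~(ii) in full generality. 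Finally, uniqueness follows because any two kernels satisfying~(ii) agree $P_t$-a.s.\ on each set of the countable generating algebra; discarding the (null) union of the corresponding exceptional sets leaves two probability measures that coincide on a $\pi$-system generating $\mathscr{B}(\Xi_{t+1:T})$, hence coincide.
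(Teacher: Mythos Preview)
Your argument is the standard construction of a regular conditional distribution via Kuratowski's isomorphism and a monotone-class extension, and it is correct as sketched. Note, however, that the paper does not supply its own proof of this theorem: it is stated as a known background result with references to Dellacherie--Meyer~[III-70] and Ambrosio--Gigli--Savar\'e~[Section~5.3], so there is no ``paper's proof'' to compare against beyond those citations. Your outline is precisely the classical route taken in those and similar textbook sources, so in that sense you are aligned with the literature the paper invokes.
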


By disintegrating the measures $P_{t}$ and composing their kernels
at subsequent stages we obtain the nested expressions 
\begin{equation}
P_{t}(A_{1}\times\dots\times A_{t})=\int_{A_{1}}\int_{A_{2}}\dots\int_{A_{t}}P_{t}\left(\mathrm{d}x_{t}|\,x_{1:t-1}\right)\dots P_{2}\left(\mathrm{d}x_{2}|\,x_{1:1}\right)P_{1}(\mathrm{d}x_{1})\label{eq:4}
\end{equation}
and the conditional probability measures 
\begin{equation}
P(A_{t+1}\times\dots\times A_{T}\mid x_{1:t})=\int_{A_{t+1}}\dots\int_{A_{T}}P_{T}\left(\mathrm{d}x_{T}|\,x_{1:T-1}\right)\dots P_{t+1}\left(\mathrm{d}x_{t+1}|\,x_{1:t}\right).\label{eq:5}
\end{equation}
Both expressions reveal the initial probability measure $P$, which
can be seen by substituting $t=T$ in~\eqref{eq:4} or $t=0$ in~\eqref{eq:5}.
\begin{rem}
The kernels derived from the projected measures~\eqref{eq:kernel}
are conditioned on the history $x_{1:t}$ and they do depend explicitly
on the entire history up to $t$. In the Markovian case this dependence
reduces (simplifies) to 
\[
P_{t+1}(\cdot\mid x_{1:t})=P_{t+1}(\cdot\mid x_{t}).
\]

An important algorithm in stochastic optimization is Stochastic Dual
Dynamic Programming (SDDP). In this context the probabilities are
typically assumed to be \emph{stagewise independent,} i.e., 
\[
P_{t+1}(\cdot\mid x_{1:t})=P_{t+1}(\cdot)
\]
(cf.\ \citet{FilipeGoulart2017}). 
\end{rem}

\section{\label{sec:ConditionalRisk}Conditional  and nested risk measures}

To define conditional risk functionals we recall the definition of
\emph{law invariant, coherent risk functionals} $\mathcal{R}\colon L\to\mathbb{R}$
defined on some vector space $L$ of $\mathbb{R}$\nobreakdash-valued
random variables first. They satisfy the following axioms introduced
by~\citet{Artzner1997}.
\begin{enumerate}[label=A\arabic*, noitemsep]
\item \label{enu:Monotonicity}Monotonicity: $\mathcal{R}\left(Y_{0}\right)\le\mathcal{R}\left(Y_{1}\right)$,
provided that $Y_{0}\le Y_{1}$ almost surely;
\item Translation equivariance: $\mathcal{R}\left(Y+c\right)=\mathcal{R}\left(Y\right)+c$
for $c\in\mathbb{R}$;
\item \label{enu:Convexity} Convexity: $\mathcal{R}\big((1-\lambda)Y_{0}+\lambda Y_{1}\big)\le(1-\lambda)\mathcal{R}\left(Y_{0}\right)+\lambda\mathcal{R}\left(Y_{0}\right)$;
\item \label{enu:Homogeneous}Positively homogeneity: $\mathcal{R}\left(\lambda Y\right)=\lambda\mathcal{R}\left(Y\right)$;
\end{enumerate}
\begin{enumerate}[resume, label=A\arabic*, noitemsep]
\item \label{enu:LawInvariance}Law invariance:  $\mathcal{R}(Y)=\mathcal{R}(Y^{\prime})$,
whenever $Y$ and $Y^{\prime}$ have the same law, i.e., $P(Y\le y)=P(Y^{\prime}\le y)$
for all $y\in\mathbb{R}$.
\end{enumerate}
We shall make frequently use of the following proposition, which is
an immediate consequence of the monotonicity axiom~\ref{enu:Monotonicity}.
\begin{prop}
\label{rem:infimum}The essential infimum of a set of random variables
apparently satisfies $\essinf_{\iota^{\prime}\in I}Y_{\iota^{\prime}}\le Y_{\iota}$
for every $\iota\in I$. Hence, by the monotonicity axiom, \ref{enu:Monotonicity},
$\mathcal{R}\left(\essinf_{\iota^{\prime}\in I}Y_{\iota^{\prime}}\right)\le\mathcal{R}\left(Y_{\iota}\right)$
and subsequently 
\[
\mathcal{R}\left(\essinf_{\iota^{\prime}\in I}Y_{\iota^{\prime}}\right)\le\inf_{\iota\in I}\mathcal{R}\left(Y_{\iota}\right).
\]
\end{prop}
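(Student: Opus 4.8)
The plan is to read off each of the three displayed inequalities as an immediate application of a single structural fact together with the monotonicity axiom, so that no computation is really involved. The one ingredient that deserves to be stated carefully is the defining property of the essential infimum: for a possibly uncountable family $\left(Y_{\iota}\right)_{\iota\in I}$ of random variables, the essential infimum exists as a $P$-a.s.\ uniquely determined random variable, characterized as the largest (in the almost-sure order) random variable $Z$ satisfying $Z\le Y_{\iota}$ almost surely for every $\iota\in I$. This rests on the standard lattice completeness of $L^{0}$; I would simply invoke it, since it is classical. In particular, being a lower bound, $\essinf_{\iota^{\prime}\in I}Y_{\iota^{\prime}}\le Y_{\iota}$ almost surely for each fixed $\iota\in I$, which is the first assertion.

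First I would fix an arbitrary $\iota\in I$ and apply the monotonicity axiom~\ref{enu:Monotonicity} to the almost-sure inequality just obtained. Since $\mathcal{R}$ is monotone and $\essinf_{\iota^{\prime}\in I}Y_{\iota^{\prime}}\le Y_{\iota}$ a.s., this yields directly
\[
\mathcal{R}\left(\essinf_{\iota^{\prime}\in I}Y_{\iota^{\prime}}\right)\le\mathcal{R}\left(Y_{\iota}\right),
\]
which is the second assertion, valid for every $\iota\in I$.

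The final step is to observe that the left-hand side above does not depend on $\iota$, so it is a lower bound for the numbers $\mathcal{R}\left(Y_{\iota}\right)$, $\iota\in I$; passing to the infimum over $\iota$ on the right gives $\mathcal{R}\left(\essinf_{\iota^{\prime}\in I}Y_{\iota^{\prime}}\right)\le\inf_{\iota\in I}\mathcal{R}\left(Y_{\iota}\right)$, as claimed. There is no genuine obstacle here; the only point requiring any care is the invocation of the existence and the greatest-lower-bound characterization of the essential infimum over an arbitrary (not necessarily countable) index set, and I would make sure to phrase the argument so that it uses only this characterization and the a.s.\ monotonicity of $\mathcal{R}$, rather than any countability assumption on $I$.
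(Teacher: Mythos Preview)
Your proposal is correct and follows exactly the same route as the paper: the paper's ``proof'' is in fact embedded in the proposition statement itself (``Hence, by the monotonicity axiom~\ref{enu:Monotonicity}, \dots and subsequently \dots''), and you have simply spelled out those two immediate steps, together with the defining lower-bound property of the essential infimum, in slightly more detail. There is nothing to add or correct.
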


The Average Value-at-Risk at level $\alpha\in[0,1)$ defined on $L^{1}(P)$
by 
\begin{equation}
\AVaR_{\alpha}(Y):=\inf_{q\in\mathbb{R}}\left\{ q+\frac{1}{1-\alpha}\E(Y-q)_{+}\right\} \label{eq:15-1}
\end{equation}
is the most prominent coherent risk functional satisfying the axioms~\ref{enu:Monotonicity}\textendash \ref{enu:LawInvariance}
above. The Average Value-at-Risk at risk level $\alpha=0$ is the
expectation, 
\[
\AVaR_{0}(Y)=\E Y
\]
and, for $Y\in L^{\infty}$, the convenient setting
\[
\AVaR_{1}(Y):=\lim_{\alpha\nearrow1}\AVaR_{\alpha}(Y)=\esssup Y
\]
continuously extends the Average Value-at-Risk to $\alpha=1$.

The Average Value-at-Risk turns out to be of central importance, it
can be interpreted as an extreme point in the set of risk functionals
and, similarly to Choquet's representation, every risk functional
is a convex combination of $\AVaR$s. The following general representation
(Kusuoka's representation, cf.~\citet{Kusuoka}) highlights this
relation. The statement is a consequence of the Fenchel\textendash Moreau
theorem in convex analysis (cf.\ \citet[Lemma~4.55]{Follmer2004}
or \citet{RuszczynskiShapiro2009,Shapiro2011,ShapiroAlois}).
\begin{defn}
A function $\sigma\colon[0,1)\to\mathbb{R}$ is  a \emph{distortion
function}, if $\sigma(\cdot)$ is non-decreasing, $\sigma(\cdot)\ge0$
and $\int_{0}^{1}\sigma(u)\mathrm{d}u=1$.
\end{defn}

\begin{prop}[Derived from Kusuoka's representation, cf.\ \citet{PflugPichlerBuch}]
\label{prop:Kusuoka}Every law invariant, coherent risk functional
$\mathcal{R}\colon L\to\mathbb{R}$ has the representation 
\begin{equation}
\mathcal{R}(Y)=\sup_{\sigma\in\mathcal{S}}\mathcal{R}_{\sigma}(Y),\label{eq:Rsigma}
\end{equation}
where $\mathcal{S}$ is an appropriate collection of distortion functions
and 
\begin{equation}
\mathcal{R}_{\sigma}(Y):=\sup\begin{Bmatrix}\E Y\zeta\,\left|\begin{array}{l}
\zeta\ge0,\,\E\zeta=1\text{ and}\\
\AVaR_{\alpha}(\zeta)\le\frac{1}{1-\alpha}\int_{\alpha}^{1}\sigma(u)\mathrm{d}u\text{ for all }\alpha\in(0,1)
\end{array}\right.\end{Bmatrix}.\label{eq:RDual}
\end{equation}
The vector space $L$ can be assumed to be $L=\left\{ Y\colon\mathcal{R}_{\sigma}(\left|Y\right|)<\infty\right\} $
(cf.\ \citet{Pichler2013a}). In applications, as well in what follows
it will be enough to consider the Lebesgue spaces $L^{p}$, $p\ge1$,
or $L^{\infty}$. 
\end{prop}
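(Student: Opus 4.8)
The plan is to reach~\eqref{eq:Rsigma} in two movements: a generic conjugate (dual) representation valid for every coherent functional, followed by a law-invariance reduction that converts the abstract dual set into the explicit $\AVaR$ constraints of~\eqref{eq:RDual}. First I would apply the Fenchel--Moreau theorem. By~\ref{enu:Convexity} and~\ref{enu:Homogeneous} the functional $\mathcal{R}$ is sublinear, and since it is real-valued and monotone on the whole space $L=L^{p}$ it is automatically continuous (the Namioka--Klee theorem for $1\le p<\infty$; on $L^{\infty}$ one instead assumes the Fatou property to secure weak$^{\ast}$ lower semicontinuity). Hence $\mathcal{R}$ is proper, convex and lower semicontinuous, so $\mathcal{R}=\mathcal{R}^{\ast\ast}$. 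Positive homogeneity forces the conjugate $\mathcal{R}^{\ast}$ to take only the values $0$ and $+\infty$, i.e.\ $\mathcal{R}^{\ast}$ is the indicator of a closed convex set $\mathcal{Z}$, and therefore
\[
\mathcal{R}(Y)=\sup_{\zeta\in\mathcal{Z}}\E Y\zeta.
\]

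Next I would read off the shape of $\mathcal{Z}$ from the remaining axioms: monotonicity~\ref{enu:Monotonicity} forces $\zeta\ge0$ and translation equivariance forces $\E\zeta=1$, so $\mathcal{Z}$ consists of probability densities. The decisive input is law invariance~\ref{enu:LawInvariance}, which makes $\mathcal{Z}$ invariant under the measure-preserving transformations of the underlying probability space. By the Hardy--Littlewood rearrangement inequality, $\E Y\zeta\le\int_{0}^{1}F_{Y}^{-1}(u)\,F_{\zeta}^{-1}(u)\,\mathrm{d}u$ with equality for the comonotone arrangement; since $\mathcal{Z}$ contains the entire orbit of each $\zeta$, the supremum of $\E Y\zeta$ over that orbit depends only on the law of $\zeta$, which I encode through its quantile function $\sigma:=F_{\zeta}^{-1}$. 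This $\sigma$ is nondecreasing, nonnegative, and integrates to $\E\zeta=1$, hence is a distortion function.

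It then remains to match $\sigma$ with~\eqref{eq:RDual}. Using the identity $\AVaR_{\alpha}(\zeta)=\frac{1}{1-\alpha}\int_{\alpha}^{1}F_{\zeta}^{-1}(u)\,\mathrm{d}u$, the constraint $\AVaR_{\alpha}(\zeta)\le\frac{1}{1-\alpha}\int_{\alpha}^{1}\sigma(u)\,\mathrm{d}u$ for all $\alpha\in(0,1)$ says precisely that $F_{\zeta}^{-1}$ is dominated by $\sigma$ in the Lorenz (convex) order, the extreme element being the density whose quantile function is $\sigma$ itself. Consequently $\mathcal{R}_{\sigma}$ is the spectral risk measure $\int_{0}^{1}F_{Y}^{-1}(u)\,\sigma(u)\,\mathrm{d}u$ and coincides with the supremum of $\E Y\zeta$ over the orbit attached to $\sigma$. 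Setting $\mathcal{S}:=\{F_{\zeta}^{-1}:\zeta\in\mathcal{Z}\}$ and passing to the supremum over all orbits yields $\mathcal{R}(Y)=\sup_{\sigma\in\mathcal{S}}\mathcal{R}_{\sigma}(Y)$, as claimed.

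The conjugacy computation and the identification of the sign and normalization constraints are routine. The main obstacle is the law-invariance reduction of the second and third steps: establishing the rearrangement inequality in the present Polish setting and, above all, proving that the convex-order closure of a single orbit is \emph{exactly} the $\AVaR$-constrained set in~\eqref{eq:RDual}, so that the inner supremum is genuinely attained as a spectral risk measure. A secondary technical point is securing lower semicontinuity on $L^{\infty}$, where the Fatou property---rather than automatic continuity---is what makes Fenchel--Moreau applicable.
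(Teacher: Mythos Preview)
The paper does not supply a proof of this proposition; it only records that the statement ``is a consequence of the Fenchel--Moreau theorem in convex analysis'' and defers to the cited references (\citet{Kusuoka}, \citet[Lemma~4.55]{Follmer2004}, \citet{RuszczynskiShapiro2009,Shapiro2011,ShapiroAlois,PflugPichlerBuch,Pichler2013a}). Your sketch is precisely the standard route those references take---biconjugation via Fenchel--Moreau to obtain the density set $\mathcal{Z}$, then law invariance plus Hardy--Littlewood rearrangement to reduce to quantile functions, and finally the Lorenz/second-order stochastic dominance characterization to identify the $\AVaR$ constraints---so your approach is entirely consistent with what the paper invokes. The caveats you flag (Fatou property on $L^{\infty}$, the need for a nonatomic space so that comonotone couplings and full orbits exist, and the exact identification of the convex-order closure with the set in~\eqref{eq:RDual}) are real and are handled in the cited sources rather than in the paper itself.
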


The representation of the distortion risk functional~\eqref{eq:Rsigma}
implicitly involves the probability measure $P$ via the expectation
$\E$ and the Average Value-at-Risk in~\eqref{eq:RDual}. We want
to make the probability measure $P$ explicit by rewriting~\eqref{eq:Rsigma}
as 
\begin{equation}
\mathcal{R}(Y)=\mathcal{R}_{\mathcal{S};P}(Y):=\sup\begin{Bmatrix}\E_{P}Y\zeta\,\left|\begin{array}{l}
\zeta\ge0,\,\E_{P}\zeta=1\text{ and}\\
\AVaR_{\alpha;P}(\zeta)\le\frac{1}{1-\alpha}\int_{\alpha}^{1}\sigma(u)\mathrm{d}u,\,\alpha\in(0,1)\\
\qquad\text{ for some }\sigma(\cdot)\in\mathcal{S}
\end{array}\right.\end{Bmatrix},\label{eq:Risk}
\end{equation}
where the expectation in $\AVaR_{\alpha;P}$ is with respect to the
probability measure $P$ as well, cf.~\eqref{eq:15-1}.
\begin{example}
\label{exa:AVaR}The Kusuoka representation of the Average Value-at-Risk
according to Proposition~\ref{prop:Kusuoka} is given by $\mathcal{S}=\left\{ \sigma_{\alpha}(\cdot)\right\} $,
where the distortion function is $\sigma_{\alpha}(u):=\begin{cases}
\frac{1}{1-\alpha} & \text{if }u\ge\alpha,\\
0 & \text{else}.
\end{cases}$
\end{example}

\subsection{Conditional risk measures}

To define conditional versions of risk measures on product spaces
we employ the conditional measures available by the disintegration
theorem, Theorem~\ref{thm:Disintegrate}.
\begin{defn}
Let $\mathcal{S}_{t+1}$ be a collection of distortion functions.
The \emph{conditional risk measure} or risk \emph{measure conditioned
on the fiber $x_{1:t}$ }of the regular kernels of the probability
measure $P$ is
\begin{equation}
\mathcal{R}_{\mathcal{S}_{t+1}}(Y\mid x_{1:t}):=\sup_{\sigma\in\mathcal{S}_{t+1}}\mathcal{R}_{\sigma;P(\cdot\mid x_{1:t})}(Y).\label{eq:16-2}
\end{equation}
\end{defn}

As a consequence of Theorem~\ref{thm:Disintegrate}\ref{enu:Disintegrate1}
and the representations~\eqref{eq:Rsigma} and~\eqref{eq:RDual},
the mapping 
\begin{align}
\mathcal{R}_{\mathcal{S}_{t+1}}(Y\mid\cdot)\colon\Xi_{1:t} & \to\mathbb{R}\nonumber \\
x_{1:t} & \mapsto\mathcal{R}_{\mathcal{S}_{t+1}}(Y\mid x_{1:t})\label{eq:10}
\end{align}
is a random variable on $\Xi_{1:t}$, which is $P_{t}$ a.s.\ well-defined
and measurable with respect to $\mathcal{F}_{t}$. For $t=0$, the
conditional risk functional~\eqref{eq:16-2} is 
\[
\mathcal{R}_{\mathcal{S}_{1}}(Y\mid x_{1:0})=\mathcal{R}_{\mathcal{S}_{1}}(Y)=\sup_{\sigma\in\mathcal{S}_{1}}\mathcal{R}_{\sigma;P}(Y)=\mathcal{R}_{\mathcal{S}_{1}}(Y),
\]
a deterministic number. 

\subsection{Nested risk measures}

The conditional risk measures~\eqref{eq:16-2} are well-defined on
a fiber $x_{1:t}$. As each risk functinoal~\eqref{eq:10} is a random
variable, they can be combined and considered in the following recursive,
or nested way.
\begin{defn}[Nested risk functional]
\label{def:NestedRisk}Let $s,\,t\in\left\{ 1,\dots,T\right\} $
with $s<t$. The \emph{nested risk functional} for a sequence $\mathcal{S}_{s+1:t}:=\mathcal{S}_{s+1}\times\dots\times\mathcal{S}_{t}$
of collections of distortion functionals is 
\begin{equation}
\mathcal{R}_{\mathcal{S}_{s+1:t}}(Y\mid x_{1:s}):=\mathcal{R}_{\mathcal{S}_{s+1}}\Bigl(\dots\mathcal{R}_{\mathcal{S}_{t-1}}\bigl(\mathcal{R}_{\mathcal{S}_{t}}(Y\mid x_{1:t-1})\mid x_{1:t-2}\bigr)\dots\mid x_{1:s}\bigr)\Bigr).\label{eq:5-1}
\end{equation}
\end{defn}

\begin{rem}
The nested risk functional $\mathcal{R}_{\mathcal{S}_{1:T}}(\cdot)$
maps real-valued random variables $Y\colon\Xi\to\mathbb{R}$ defined
on $\Xi$ to the real line. The nested risk functional satisfies generalizations
of the axioms~\ref{enu:Monotonicity}\textendash \ref{enu:Homogeneous},
but it is \emph{not} law invariant any longer, i.e., \ref{enu:LawInvariance}
is not necessarily satisfied.

The construction employed in \citet{Shapiro2015} to discuss rectangular
sets is similar to nested risk measure given in Definition~\ref{def:NestedRisk}
above. Indeed, they can be recovered by choosing the feasible set
as given in the general representation~\eqref{eq:Risk}. A major
difference is given by the fact that law invariant risk functionals
have the Kusuoka representation~\eqref{eq:Risk}, which is not the
case for more general risk functionals.
\end{rem}

Importantly, the nested risk measures are recursive as specified in
the following proposition.
\begin{prop}
The nested risk functional $\mathcal{R}_{\mathcal{S}_{t+1:T}}$ is
recursive, it holds that 
\begin{equation}
\mathcal{R}_{\mathcal{S}_{t+1:T}}(Y\mid x_{1:t})=\mathcal{R}_{\mathcal{S}_{t+1:s}}\left(\mathcal{R}_{\mathcal{S}_{s+1:T}}(Y\mid x_{1:s})\mid x_{1:t}\right)\label{eq:recursive}
\end{equation}
whenever $0\le t<s<T$.
\end{prop}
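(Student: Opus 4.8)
The plan is to prove the recursion directly from the definition of the nested risk functional, Definition~\ref{def:NestedRisk}, by exploiting the associativity of the nesting construction. The key observation is that~\eqref{eq:5-1} builds the nested functional by applying conditional risk measures one stage at a time, innermost first; the claimed identity~\eqref{eq:recursive} is essentially the statement that this sequential application can be grouped at the intermediate stage $s$ without changing the result. So the heart of the matter is bookkeeping: carefully unfolding both sides of~\eqref{eq:recursive} according to Definition~\ref{def:NestedRisk} and checking that they coincide as compositions of the same elementary conditional operators $\mathcal{R}_{\mathcal{S}_{r}}(\cdot\mid x_{1:r-1})$, $r=t+1,\dots,T$.

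First I would expand the right-hand side of~\eqref{eq:recursive}. The inner object $\mathcal{R}_{\mathcal{S}_{s+1:T}}(Y\mid x_{1:s})$ is, by Definition~\ref{def:NestedRisk}, the composition
\[
\mathcal{R}_{\mathcal{S}_{s+1}}\bigl(\dots\mathcal{R}_{\mathcal{S}_{T}}(Y\mid x_{1:T-1})\dots\mid x_{1:s}\bigr),
\]
which is an $\mathcal{F}_{s}$-measurable random variable in $x_{1:s}$. Applying the outer operator $\mathcal{R}_{\mathcal{S}_{t+1:s}}(\cdot\mid x_{1:t})$ to it prepends the block of conditional functionals $\mathcal{R}_{\mathcal{S}_{t+1}},\dots,\mathcal{R}_{\mathcal{S}_{s}}$, again by Definition~\ref{def:NestedRisk}. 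The resulting expression is the composition of all the single-stage conditional operators $\mathcal{R}_{\mathcal{S}_{t+1}},\dots,\mathcal{R}_{\mathcal{S}_{T}}$ applied in the correct innermost-to-outermost order. On the other hand, the left-hand side $\mathcal{R}_{\mathcal{S}_{t+1:T}}(Y\mid x_{1:t})$ is, directly from Definition~\ref{def:NestedRisk}, precisely that same composition over the full range $t+1,\dots,T$. Matching the two strings of operators term by term then gives the equality.

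The one point that needs genuine care—and the step I expect to be the main obstacle—is that the recursion treats the intermediate quantity $\mathcal{R}_{\mathcal{S}_{s+1:T}}(Y\mid x_{1:s})$ as a \emph{single} random variable that is fed into the outer functional, so I must verify that applying $\mathcal{R}_{\mathcal{S}_{s+1}}(\cdot\mid x_{1:s})$ to $Y$ and then forgetting its internal structure yields the same measurable function of $x_{1:s}$ as the corresponding stage of the fully unfolded left-hand side. This is guaranteed because each conditional operator acts pointwise on fibers via the regular kernels from Theorem~\ref{thm:Disintegrate}, so the measurability statement following~\eqref{eq:10} ensures that the intermediate object is a legitimate argument for the next operator and that the two groupings produce identical $\mathcal{F}_{t}$-measurable random variables. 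A clean way to organize the argument is by induction on $s-t$: the base case $s=t+1$ is immediate from the definition of a single conditional step, and the inductive step peels off one stage and reduces to a smaller instance of the same claim. I would close by remarking that the edge cases $t=0$ (where the conditioning fiber is the empty tuple $x_{1:0}=()$) and the degenerate boundary $s=t$ or $s=T$ reduce to the definitions and require no separate treatment.
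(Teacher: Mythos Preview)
Your proposal is correct and takes essentially the same approach as the paper: both recognize that~\eqref{eq:recursive} is an immediate consequence of the compositional definition~\eqref{eq:5-1}, since the nested functional is built by stacking the single-stage operators $\mathcal{R}_{\mathcal{S}_{r}}(\cdot\mid x_{1:r-1})$ and grouping this composition at stage~$s$ changes nothing. The paper dispatches this in one sentence, whereas you spell out the bookkeeping (measurability of the intermediate object, induction on $s-t$, edge cases), but the underlying argument is identical.
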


\begin{proof}
The assertion is an immediate consequence of the recursion~\eqref{eq:5-1}
in Definition~\ref{def:NestedRisk}. 
\end{proof}
\begin{example}[Conditional expectation]
The risk-neutral special case is given by choosing the simplest distortion
functions $\mathcal{S}_{t+1}=\left\{ \one\right\} $, i.e., the distortions
consisting only of the constant function $\sigma(\cdot)=\one(\cdot)=1$.
In this case the risk functional~\eqref{eq:16-2} is 
\[
\mathcal{R}_{\mathcal{S}_{t+1};P}(Y\mid x_{1:t})=\E\left(Y\mid x_{1:t}\right),
\]
i.e., 
\[
\mathcal{R}_{\mathcal{S}_{t+1};P}(Y\mid\cdot)=\E^{\mid\mathcal{F}_{t}}(Y)
\]
(recall that $\E^{\mid\mathcal{F}_{t}}(Y)$ is indeed an $\mathcal{F}_{t}$
random variable). The recursion~\eqref{eq:recursive} reflects the
tower property of the conditional expectation.
\end{example}

\begin{defn}[Nested Average Value-at-Risk, cf.~\citet{PflugRomisch2007}]
The nested Average Value-at-Risk for $\alpha_{s+1:t}\in[0,1]^{t-s}$
is a composition of $\AVaR$s at risk levels dependent on the state
$t$. More explicitly, we set 
\begin{equation}
\nAVaR_{\alpha_{s+1:t}}(Y\mid x_{1:s}):=\AVaR_{\alpha_{s+1};P(\cdot|x_{1:s})}\Bigl(\dots\,\AVaR_{\alpha_{t-1};P(\cdot|x_{1:t-2})}\bigl(\AVaR_{\alpha_{t};P(\cdot|x_{1:t-1})}(Y)\bigr)\bigr)\Bigr).\label{eq:nAVaR}
\end{equation}
\end{defn}

The nested Average Value-at-Risk can be bounded by the Average Value-at-Risk.
Indeed, it follows from \citet[Proposition~4.2]{ShapiroXin} that
$\nAVaR_{\alpha_{1:T};P}(Y)\le\AVaR_{\alpha}(Y)$ provided that the
risk level $\alpha$ satisfies $\alpha\ge1-(1-\alpha_{1})\dots(1-\alpha_{T})$.

\section{\label{sec:CostProcess}The distance adapted to nested risk measures }

Generalizing the concept of distance from probability spaces to filtered
probability spaces corresponds to generalizing the distance from random
variables to stochastic processes. As a metric for probability measures
we recall the Wasserstein distance first here, which we then generalize
to a metric of stochastic processes.

\subsection{Wasserstein metric}

Consider the Polish space $(\Xi,d)$ and probability measures 
\[
P,\,\tilde{P}\colon\mathcal{F}\to[0,1]
\]
on the Borel sigma algebra $\mathcal{F}:=\mathscr{B}(\Xi)$. 
\begin{defn}[Wasserstein metric]
\label{def:Wasserstein}Let $P$ and $\tilde{P}$ be probability
measures on $\Xi$ and $r\in[1,\infty)$. The \emph{Wasserstein metric
of order $r$} with respect to the cost function $c\colon\Xi\times\Xi\to\mathbb{R}$
 is 
\begin{equation}
w_{r}(P,\tilde{P};\,c):=\inf_{\pi}\left(\E_{\pi}c^{r}\right)^{\nicefrac{1}{r}}=\inf_{\pi}\left(\iint_{\Xi\times\Xi}c(x,y)^{r}\,\pi(\mathrm{d}x,\mathrm{d}y)\right)^{\nicefrac{1}{r}},\label{eq:1}
\end{equation}
where the infimum in~\eqref{eq:1} is among all bivariate probability
measures $\pi\in\mathcal{P}(\Xi\times\Xi)$ with marginals $P$ and
$\tilde{P}$, i.e., 
\begin{align}
\pi(A\times\Xi) & =P(A),\quad A\in\mathscr{B}(\Xi)\quad\text{ and}\label{eq:11}\\
\pi(\Xi\times B) & =\tilde{P}(B),\quad B\in\mathscr{B}(\Xi).\label{eq:12}
\end{align}
For the Wasserstein distance of order $r=1$ we shall also write simply
$w(P,\tilde{P})$. 
\end{defn}

\begin{rem}
\label{rem:Wasserstein}The Wasserstein metric introduced in~\eqref{eq:1}
is based on a cost functions $c(\cdot)$ (cf.\ also \citet{Villani2003}).
This setting slightly generalizes the usual definition, which is based
on the distance function $d$ of the space $(\Xi,d)$ in lieu of $c$.
In what follows, this extension will be essential.
\end{rem}

\subsection{The nested distance}

The Wasserstein metric $w_{r}$ introduced in Definition~\ref{def:Wasserstein}
is of course well defined for measures~$P$ and~$\tilde{P}$ on
the product space $\left(\Xi_{1:T},d\right)$. The nested distance
generalizes the Wasserstein metric by involving the filtration in
addition. The filtration carries the information revealed over time.
The filtration considered here is the coordinate filtration, and for
this we may introduce the nested distance on coordinate basis as well,
i.e., sequentially by defining the process stage by stage. 
\begin{defn}[Cost process, nested distance]
\label{def:Nested}Let $P$ and $\tilde{P}$ be probability measures
on $\Xi_{1:T}$, let $r\in[1,\infty)$ and let $c\colon\Xi_{1:T}\times\Xi_{1:T}\to\mathbb{R}$
be a lower semi-continuous (lsc.) function.
\begin{enumerate}
\item Cost process $c_{t}$ for $t=T$ down to $0$: 
\begin{enumerate}
\item The cost function $c_{T}$ on $\Xi_{1:T}\times\Xi_{1:T}$ at terminal
time $T$ is 
\begin{align*}
c_{T}\big(x_{1:T},y_{1:T} & \big):=c\big(x_{1:T},y_{1:T}\big).
\end{align*}
We shall refer to $c_{T}$ also as the \emph{terminal} cost function.
\item The cost functions $c_{t}$ for $t<T$ are defined in a backwards
recursive way by
\begin{equation}
c_{t-1}\big(x_{1:T},y_{1:T}\big):=w_{r}\left(P_{t}\left(\cdot\mid x_{1:t-1}\right),\,\tilde{P}_{t}\left(\cdot\mid y_{1:t-1}\right);\,c_{t}\right),\quad t=T,\dots,1,\label{eq:3}
\end{equation}
where $w_{r}$ is the Wasserstein metric of order $r$.
\item The \emph{cost-process} is the stochastic process $c=\left(c_{t}\right)_{t=0}^{T}$.
\end{enumerate}
\item The nested distance: let $c=\left(c_{t}\right)_{t=0}^{T}$ be the
cost process with terminal cost 
\begin{equation}
c_{T}(\cdot)=d(\cdot),\label{eq:Dist}
\end{equation}
the distance of the space $\Xi_{1:T}$ (cf.~\eqref{eq:2}). The \emph{nested
distance} of order $r\ge1$ of the measures $P$ and $\tilde{P}$
is 
\begin{equation}
\nd_{r}\bigl(P,\tilde{P}\bigr):=c_{0}.\label{eq:8}
\end{equation}
\end{enumerate}
\end{defn}

\begin{rem}
The function $c_{t}$ is defined for $\left(x_{1:T},y_{1:T}\right)\in\Xi_{1:T}\times\Xi_{1:T}$,
but its definition in~\eqref{eq:3} notably involves only the truncated
states $\left(x_{1:t},y_{1:t}\right)\in\Xi_{1:t}\times\Xi_{1:t}$.
The cost function $c_{t}$ thus is unambiguously defined for $\left(x_{1:t},y_{1:t}\right)$,
irrespective of future realization $\left(x_{t+1:T},y_{t+1:T}\right)$.
It follows that $c_{t}$ is $\mathcal{F}_{t}\otimes\mathcal{F}_{t}$
measurable and the cost process $(c_{t})_{t=0}^{T}$ is adapted to
the filtration $\mathcal{F}\otimes\mathcal{F}$.

In particular, $c_{0}$ is independent of the formal argument $\left(x_{1:T},y_{1:T}\right)$
(the string $x_{1:0}$ is empty for $t=0$ in~\eqref{eq:3}) so that
$c_{0}$ is a number ($c_{0}=\nd_{r}\bigl(P,\tilde{P}\bigr)\in\mathbb{R}$)
and the nested distance is well-defined by~\eqref{eq:8}. 
\end{rem}

\begin{rem}
It is a consequence of Hölder's inequality that $w_{r}\bigl(P,\tilde{P}\bigr)\le w_{r^{\prime}}\bigl(P,\tilde{P}\bigr)$
whenever $r\le r^{\prime}$. By monotonicity of~\eqref{eq:3} we
thus get that 
\begin{equation}
\nd_{r}\bigl(P,\tilde{P}\bigr)\le\nd_{r^{\prime}}\bigl(P,\tilde{P}\bigr)\qquad(r\le r^{\prime}).\label{eq:5-2}
\end{equation}
\end{rem}

\begin{rem}[Relation to Wasserstein metric]
For $T=1$ we have $\Xi_{1:T}=\Xi_{1}$ and there are no intermediary
stages present. In this case, the nested distance reduces to the usual
Wasserstein metric and it holds that 
\[
\nd_{r}\bigl(P,\tilde{P}\bigr)=w_{r}\bigl(P,\tilde{P};\,d\bigr)\qquad(T=1).
\]
\end{rem}

\begin{rem}
As for the Wasserstein distance we also write $\nd(P,\tilde{P})$
if the order is $r=1$ (cf.\ Remark~\ref{rem:Wasserstein}).
\end{rem}

An important case in practice is the cost functions, where costs occur
sequentially at every stage and total costs are accumulated over time.
The cost process reflects this additive property, as the following
proposition outlines.
\begin{prop}[Additive cost functions]
\label{prop:Additive}Suppose the terminal cost function is of particular
form 
\begin{equation}
c_{T}(x,y)=\ell_{r}(x,y)=\left(\sum_{t=1}^{T}d_{t}(x_{t},y_{t})^{r}\right)^{\nicefrac{1}{r}},\label{eq:lr}
\end{equation}
where $d_{t}$, $t=1,\dots,T$ are functions on $\Xi_{t}\times\Xi_{t}$
(distance functions, e.g.). Then the process 
\begin{equation}
\tilde{c}_{t}:=\left(c_{t}^{r}-\sum_{j=1}^{t-1}d_{j}^{r}\right)^{\nicefrac{1}{r}}\label{eq:Pflug}
\end{equation}
 satisfies the recursive equations 
\begin{equation}
\tilde{c}_{t-1}^{r}=d_{t-1}^{r}+w_{r}\left(P_{t}\left(\cdot\mid x_{1:t-1}\right),\,\tilde{P}_{t}\left(\cdot\mid y_{1:t-1}\right);\,\tilde{c}_{t}\right)^{r}\label{eq:9-1}
\end{equation}
with $\tilde{c}_{T}=d_{T}$.

Further, the nested distance is 
\[
\nd_{r}\bigl(P,\tilde{P}\bigr)=\tilde{c}_{0}.
\]
\end{prop}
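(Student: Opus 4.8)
The plan is to prove the two assertions in order: first establish the recursion~\eqref{eq:9-1} with its terminal condition, and then deduce the final identity $\nd_r(P,\tilde P)=\tilde c_0$ as a corollary. The starting point is the observation that the substitution~\eqref{eq:Pflug} is purely algebraic: it simply strips off the accumulated cost $\sum_{j=1}^{t-1}d_j^r$ from $c_t^r$. So the whole argument reduces to tracking how this bookkeeping interacts with the backward recursion~\eqref{eq:3} defining the cost process.

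First I would verify the terminal condition. At $t=T$ we have $c_T=\ell_r$, so by~\eqref{eq:Pflug},
\begin{equation}
\tilde c_T^{\,r}=c_T^{\,r}-\sum_{j=1}^{T-1}d_j^{\,r}=\sum_{t=1}^{T}d_t^{\,r}-\sum_{j=1}^{T-1}d_j^{\,r}=d_T^{\,r},\label{eq:plan-terminal}
\end{equation}
which gives $\tilde c_T=d_T$ as claimed. Next I would establish the recursion by plugging the definition~\eqref{eq:Pflug} into~\eqref{eq:3}. The key structural fact is that the accumulated term $\sum_{j=1}^{t-1}d_j^{\,r}$ appearing in the definition of $\tilde c_t$ depends only on the coordinates $x_{1:t-1}$ and $y_{1:t-1}$; hence, when the Wasserstein distance~\eqref{eq:3} integrates $c_t$ over the next coordinate via the kernels $P_t(\cdot\mid x_{1:t-1})$ and $\tilde P_t(\cdot\mid y_{1:t-1})$, the quantity $d_{t-1}^{\,r}$ (and all earlier accumulated costs) is constant with respect to that integration and factors out of the optimization. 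More precisely, writing $c_t^{\,r}=\tilde c_t^{\,r}+\sum_{j=1}^{t-1}d_j^{\,r}$ and using that the inner sum is measurable with respect to the conditioning fiber, the transport problem defining $c_{t-1}^{\,r}$ splits into the constant piece plus a transport problem for $\tilde c_t$.

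The main obstacle — and the step that deserves care — is justifying that the constant $\sum_{j=1}^{t-2}d_j^{\,r}$ and the term $d_{t-1}^{\,r}$ pass through the $w_r(\cdot;c_t)^r$ expression cleanly, so that
\[
c_{t-1}^{\,r}=w_r\bigl(P_t(\cdot\mid x_{1:t-1}),\,\tilde P_t(\cdot\mid y_{1:t-1});\,c_t\bigr)^r=\sum_{j=1}^{t-1}d_j^{\,r}+w_r\bigl(P_t(\cdot\mid x_{1:t-1}),\,\tilde P_t(\cdot\mid y_{1:t-1});\,\tilde c_t\bigr)^r.
\]
This rests on the elementary but essential identity that for any coupling $\pi$ of the two kernels, $\E_\pi c_t^{\,r}=\sum_{j=1}^{t-1}d_j^{\,r}+\E_\pi\tilde c_t^{\,r}$, because the additive constant integrates to itself against any probability measure; since the additive constant is independent of $\pi$, the same minimizing coupling solves both transport problems and the infima differ exactly by that constant. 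Subtracting $\sum_{j=1}^{t-2}d_j^{\,r}$ from both sides of the resulting expression for $c_{t-1}^{\,r}$ — which is precisely the adjustment~\eqref{eq:Pflug} prescribes for $\tilde c_{t-1}$ — then leaves $\tilde c_{t-1}^{\,r}=d_{t-1}^{\,r}+w_r(\cdots;\tilde c_t)^r$, which is~\eqref{eq:9-1}. Finally, the identity $\nd_r(P,\tilde P)=\tilde c_0$ is immediate: at $t=0$ the empty sum $\sum_{j=1}^{-1}d_j^{\,r}$ vanishes, so $\tilde c_0=c_0=\nd_r(P,\tilde P)$ by~\eqref{eq:8}.
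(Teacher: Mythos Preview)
Your proposal is correct and follows essentially the same route as the paper: both arguments hinge on the observation that $\sum_{j=1}^{t-1}d_j^{\,r}$ is measurable with respect to the conditioning fiber $(x_{1:t-1},y_{1:t-1})$ and therefore pulls out of the infimum defining $w_r(\cdot;c_t)^r$, after which subtracting $\sum_{j=1}^{t-2}d_j^{\,r}$ yields~\eqref{eq:9-1}. Your write-up is in fact slightly more complete, since you explicitly verify the terminal condition $\tilde c_T=d_T$ and the identity $\tilde c_0=c_0=\nd_r(P,\tilde P)$, whereas the paper's proof leaves these implicit.
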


\begin{rem}
The recursive equation~\eqref{eq:9-1} is actually the initial attempt
in defining a distance on the nested spaces $\Xi_{t}\times\mathcal{P}(\Xi_{t-1})$
for the particular case $r=1$, where $\mathcal{P}(\Xi_{t-1})$ is
the set of probability measures on $\Xi_{t-1}$. We refer to~\citet{Pflug2009}
for the initial and complete discussion on nested spaces and nested
distances. 
\end{rem}

\begin{proof}
From~\eqref{eq:3} we have that 
\[
c_{t-1}\left(x_{1:T},y_{1:T}\right)^{r}=w_{r}\left(P_{t}\left(\cdot\mid x_{1:t-1}\right),\,\tilde{P}_{t}\left(\cdot\mid y_{1:t-1}\right);\,c_{t}\right)^{r}.
\]
As $d_{j}$ are $\mathcal{F}_{t-1}$\nobreakdash-measurable for for
every $j<t$ it follows further that 
\[
c_{t-1}\left(x_{1:T},y_{1:T}\right)^{r}=\sum_{j=1}^{t-1}d_{j}^{r}+w_{r}\left(P_{t}\left(\cdot\mid x_{1:t-1}\right),\,\tilde{P}_{t}\left(\cdot\mid y_{1:t-1}\right);\,\left(c_{t}^{r}-\sum_{j=1}^{t-1}d_{j}^{r}\right)^{\nicefrac{1}{r}}\right)^{r}
\]
and hence
\begin{align*}
\tilde{c}_{t-1}^{r}\left(x_{1:T},y_{1:T}\right)^{r} & =c_{t-1}\left(x_{1:T},y_{1:T}\right)^{r}-\sum_{j=1}^{t-2}d_{j}^{r}\\
 & =d_{t-1}^{r}+w_{r}\left(P_{t}\left(\cdot\mid x_{1:t-1}\right),\,\tilde{P}_{t}\left(\cdot\mid y_{1:t-1}\right);\,\left(c_{t}^{r}-\sum_{j=1}^{t-1}d_{j}^{r}\right)^{\nicefrac{1}{r}}\right)^{r}\\
 & =d_{t-1}^{r}+w_{r}\left(P_{t}\left(\cdot\mid x_{1:t-1}\right),\,\tilde{P}_{t}\left(\cdot\mid y_{1:t-1}\right);\,\tilde{c}_{t}\right)^{r},
\end{align*}
which is the assertion.
\end{proof}
\begin{rem}
It is evident that the assertion of the previous statement holds as
well in case of cost functions which are nonanticipative and of the
form $c_{T}(x,y)=\left(\sum_{t=1}^{T}d_{t}(x_{1:t},y_{1:t})^{r}\right)^{\nicefrac{1}{r}}$.
\end{rem}

\subsection{Characterization as a martingale\label{subsec:Martingale}}

For the measure $P$ we have given the nested expressions~\eqref{eq:4}
and~\eqref{eq:5} based on kernels explicitly. In the same way one
may glue together the kernels which are optimal in~\eqref{eq:3}
to compute the nested distance and cost process. To this end denote
the optimal kernels on $\Xi_{t}\times\Xi_{t}$ obtained in~\eqref{eq:3}
by $\pi_{t}(\cdot\times\cdot\mid x_{1:t},y_{1:t})$. A well-known
result of \citet{Brenier1987,Brenier1991} (see also \citet{McCann})
asserts that the Wasserstein problem~\eqref{eq:1} attains the infimum
at a unique bivariate measure $\pi$ for the quadratic cost function
$c(x,y)=\left\Vert x-y\right\Vert ^{2}$, if both measures $P$ and
$\tilde{P}$ have finite variance and do not give mass to small sets
(cf.\ \citet[Theorem~2.12]{Villani2003}); the measures $\pi_{t}(\cdot\times\cdot\mid x_{1:t},y_{1:t})$
thus exist.

The global measure governing all kernels then is 
\begin{align}
\pi\left(A\times B\right): & =\iint_{A_{1}\times B_{1}}\Big(\iint_{A_{2}\times B_{2}}\dots\big(\iint_{A_{T}\times B_{T}}\pi_{T}(\mathrm{d}x_{T},\mathrm{d}y_{T}\mid x_{1:T-1},y_{1:T-1})\big)\label{eq:9}\\
 & \qquad\qquad\qquad\qquad\dots\pi_{2}(\mathrm{d}x_{2},\mathrm{d}y_{2}|\,x_{1},y_{1})\Big)\pi_{1}(\mathrm{d}x_{1},\mathrm{d}y_{1}),\nonumber 
\end{align}
where $A=A_{1}\times\dots\times A_{T}$ and $B=B_{1}\times\dots\times B_{T}$.
The measure $\pi$ is a bivariate measure on the entire space $\Xi_{1:T}\times\Xi_{1:T}$.

We have the following alternative characterization of the governing
bivariate measure~\eqref{eq:9}.
\begin{prop}
The conditional marginals of the measure $\pi$ defined in~\eqref{eq:9}
satisfy 
\begin{align}
\pi\left(A\times\Xi\mid x_{1:t},y_{1:t}\right) & =P\left(A\mid x_{1:t}\right),\qquad A\in\mathcal{F}_{T}\quad\text{ and}\label{eq:44}\\
\pi\left(\Xi\times B\mid x_{1:t},y_{1:t}\right) & =\tilde{P}\left(B\mid y_{1:t}\right),\qquad B\in\mathcal{F}_{T},\label{eq:45}
\end{align}
for every $t\in\left\{ 0,\dots T-1\right\} $. 
\end{prop}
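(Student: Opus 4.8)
The plan is to verify the identity first on rectangular sets and then extend to all of $\mathcal{F}_T$. Since both $A\mapsto\pi\left(A\times\Xi\mid x_{1:t},y_{1:t}\right)$ and $A\mapsto P\left(A\mid x_{1:t}\right)$ are probability measures on $\mathcal{F}_T$, and since the rectangles $A=A_{1}\times\dots\times A_{T}$ form a $\pi$-system generating $\mathcal{F}_T$, it suffices to establish agreement on such rectangles; the general case then follows from the uniqueness of measures coinciding on a generating $\pi$-system. The starting point is the observation that each optimal coupling $\pi_{j}$ entering~\eqref{eq:9} is, by the very definition of the Wasserstein problem~\eqref{eq:3} together with the marginal constraints~\eqref{eq:11}--\eqref{eq:12}, a coupling of $P_{j}\left(\cdot\mid x_{1:j-1}\right)$ and $\tilde{P}_{j}\left(\cdot\mid y_{1:j-1}\right)$. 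In particular its first marginal
\[
\pi_{j}\left(\cdot\times\Xi_{j}\mid x_{1:j-1},y_{1:j-1}\right)=P_{j}\left(\cdot\mid x_{1:j-1}\right)
\]
depends on the $x$-history alone, a fact that will drive the whole computation.

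Next I would read off from the nested form~\eqref{eq:9} that the regular conditional measure of $\pi$ given the joint fiber $\left(x_{1:t},y_{1:t}\right)$---that is, its disintegration with respect to $\mathcal{F}_{t}\otimes\mathcal{F}_{t}$, whose uniqueness is guaranteed by Theorem~\ref{thm:Disintegrate}---is precisely the gluing of the tail kernels $\pi_{t+1},\dots,\pi_{T}$, in complete analogy with~\eqref{eq:5}. Evaluating this conditional measure on $A\times\Xi$ for a rectangle (the coordinates up to $t$ being fixed consistently by the conditioning) then amounts to taking the $x$-marginal of the glued tail while leaving every $y$-coordinate unconstrained. I would compute this by a backward induction from $T$ down to $t+1$: the innermost integral reduces $\pi_{T}$ to $P_{T}\left(A_{T}\mid x_{1:T-1}\right)$ by the marginal property above, and since this is a function of the $x$-history alone, integrating out the unconstrained $y_{T-1}$ at the next stage again collapses $\pi_{T-1}$ to its first marginal $P_{T-1}\left(\cdot\mid x_{1:T-2}\right)$, and so on. The iterated integral that survives is exactly the composition $P_{t+1}\circ\dots\circ P_{T}$ of kernels, which by~\eqref{eq:5} equals $P\left(A_{t+1}\times\dots\times A_{T}\mid x_{1:t}\right)$, giving~\eqref{eq:44}. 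The claim~\eqref{eq:45} follows by the symmetric argument with the roles of $x$ and $y$ (and of $P$ and $\tilde{P}$) interchanged.

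The main obstacle is the backward-induction collapse: everything hinges on the fact that the first marginal of each optimal coupling $\pi_{j}$ is $P_{j}\left(\cdot\mid x_{1:j-1}\right)$ and hence independent of the $y$-history, so that the successive integrations over the unconstrained $y$-coordinates disappear one stage at a time without disturbing the $x$-dependence of the partial integrand. The only other point requiring care is the identification of the conditional measure of $\pi$ as the glued tail kernels; this is essentially a matter of rewriting~\eqref{eq:9} in disintegrated, iterated-integral form and invoking the uniqueness clause of the disintegration theorem, after which the extension from rectangles to all of $\mathcal{F}_T$ is routine.
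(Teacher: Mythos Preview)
Your proposal is correct and follows essentially the same route as the paper's proof: both establish the identity on rectangles via a backward induction that successively collapses the inner integrals using that the first marginal of each kernel $\pi_{j}(\cdot\times\Xi_{j}\mid x_{1:j-1},y_{1:j-1})=P_{j}(\cdot\mid x_{1:j-1})$ depends only on the $x$-history, and then invoke the decomposition~\eqref{eq:5}. You are in fact a bit more careful than the paper in spelling out the $\pi$-system extension to all of $\mathcal{F}_{T}$ and the identification of the conditional law of $\pi$ with the glued tail kernels via the disintegration theorem.
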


\begin{proof}
The most inner integral in~\eqref{eq:9} satisfies 
\[
\iint_{A_{T}\times\Xi_{T}}\pi(\mathrm{d}x_{T},\mathrm{d}y_{T}|\,x_{1:T-1},y_{1:T-1})=\pi(A_{T}\times\Xi_{T}|\,x_{1:T-1},y_{1:T-1})=P(A_{T}|\,x_{1:T-1})
\]
by construction of the measure $\pi(\cdot,\cdot|\,x_{1:T-1},y_{1:T-1})$.
This is~\eqref{eq:44} for the terminal time $t=T-1$.

Suppose now, by backwards inductions, that the marginal~\eqref{eq:44}
is valid for $t+1$. Then 
\begin{align*}
\pi & \left(A_{t+1:T}\times\Xi_{t+1:T}|\,x_{1:t},y_{1:t}\right)\\
 & =\iint_{A_{t+1}\times\Xi_{t+1}}\dots\iint_{A_{T}\times\Xi_{T}}\pi(\mathrm{d}x_{T},\mathrm{d}y_{T}|\,x_{1:T-1},y_{1:T-1})\dots\pi(\mathrm{d}x_{t+1},\mathrm{d}y_{t+1}|\,x_{1:t},y_{1:t})\\
 & =\iint_{A_{t+1}\times\Xi_{t+1}}\pi(A_{t+2:T}\times\Xi_{t+2:T}|\,x_{1:t+1},y_{1:t+1})\pi(\mathrm{d}x_{t+1},\mathrm{d}y_{t+1}|\,x_{1:t},y_{1:t})\\
 & =\iint_{A_{t+1}\times\Xi_{t+1}}P(A_{t+2:T}|\,x_{1:t+1})\pi(\mathrm{d}x_{t+1},\mathrm{d}y_{t+1}|\,x_{1:t},y_{1:t})\\
 & =\int_{A_{t+1}}P(A_{t+2:T}|\,x_{1:t+1})P(\mathrm{d}x_{t+1}|\,x_{1:t})\\
 & =P(A_{t+1:T}|\,x_{1:t}),
\end{align*}
where we have used the decomposition~\eqref{eq:9}, the induction
hypothesis, the decomposition~\eqref{eq:5} and the setting $A_{t+1:T}:=A_{t+1}\times A_{t+2:T}$,
. We conclude that identity~\eqref{eq:44} is valid for all $t$.

The remaining identity~\eqref{eq:45} follows analogously.
\end{proof}
The process $(c_{t})_{t=0}^{T}$ given in Definition~\ref{def:Nested}
is constructed by recursively averaging with respect to the conditional
measures of $\pi$ given in~\eqref{eq:9}. We thus have the following
characterization as a martingale.
\begin{thm}[Martingale characterization]
\label{thm:Martingale}Let $\pi(\cdot,\cdot)$ be the measure defined
in~\eqref{eq:9} and $r\ge1$. Then the cost process $c=(c_{t}^{r})_{t=1}^{T}$
is a martingale with respect to $\pi$ and the canonical filtration,
i.e., 
\[
c_{t}^{r}=\E_{\pi}\left(c_{t+1}^{r}\mid\mathcal{F}_{t}\otimes\mathcal{F}_{t}\right).
\]
\end{thm}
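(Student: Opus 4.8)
The plan is to establish the one-step martingale identity directly from the construction of the gluing measure $\pi$ in~\eqref{eq:9} and then iterate it. The remark following Definition~\ref{def:Nested} already records that each $c_t$ is $\mathcal{F}_t\otimes\mathcal{F}_t$ measurable, so $c_t^r$ is a legitimate candidate for $\E_{\pi}\bigl(c_{t+1}^r\mid\mathcal{F}_t\otimes\mathcal{F}_t\bigr)$; the whole argument reduces to verifying that this candidate is correct. Since $c_{t+1}^r$ depends only on the truncated states $(x_{1:t+1},y_{1:t+1})$, computing its conditional expectation given the history $(x_{1:t},y_{1:t})$ amounts to integrating out the single coordinate $(x_{t+1},y_{t+1})$ against the appropriate regular conditional distribution of $\pi$.

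First I would identify that regular conditional distribution. By the very construction~\eqref{eq:9}, $\pi$ is the nested gluing of the optimal kernels $\pi_s(\cdot\times\cdot\mid x_{1:s-1},y_{1:s-1})$, so disintegrating $\pi$ with respect to $\mathcal{F}_t\otimes\mathcal{F}_t$ returns, on the $(t+1)$-th coordinate, exactly the kernel $\pi_{t+1}(\cdot\times\cdot\mid x_{1:t},y_{1:t})$. This is the same measure-theoretic bookkeeping used in the preceding proposition on the conditional marginals of $\pi$, now applied to the joint coordinate rather than to a single marginal; formally it is an application of the disintegration theorem, Theorem~\ref{thm:Disintegrate}, to the bivariate measure $\pi$. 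Consequently
\[
\E_{\pi}\bigl(c_{t+1}^r\mid\mathcal{F}_t\otimes\mathcal{F}_t\bigr)(x_{1:t},y_{1:t})=\iint_{\Xi_{t+1}\times\Xi_{t+1}}c_{t+1}^r(x_{1:t+1},y_{1:t+1})\,\pi_{t+1}(\mathrm{d}x_{t+1},\mathrm{d}y_{t+1}\mid x_{1:t},y_{1:t}).
\]

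Second, I would match this integral with the defining recursion. Shifting the index $t\mapsto t+1$ in~\eqref{eq:3} gives $c_t^r=w_r\bigl(P_{t+1}(\cdot\mid x_{1:t}),\,\tilde{P}_{t+1}(\cdot\mid y_{1:t});\,c_{t+1}\bigr)^r$, and by construction $\pi_{t+1}(\cdot\times\cdot\mid x_{1:t},y_{1:t})$ is precisely the coupling attaining this infimum in~\eqref{eq:1}. Hence the right-hand side of the display above equals $c_t^r(x_{1:t},y_{1:t})$, which is the asserted identity $c_t^r=\E_{\pi}(c_{t+1}^r\mid\mathcal{F}_t\otimes\mathcal{F}_t)$, valid for every $t\in\{0,\dots,T-1\}$.

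The main obstacle is the identification in the first step: one must check that the iterated-integral structure of~\eqref{eq:9} indeed yields $\pi_{t+1}$ as the $\pi$-conditional law of $(x_{t+1},y_{t+1})$ given $(x_{1:t},y_{1:t})$, which requires the same care with measurability and $\pi$-a.s.\ uniqueness as in Theorem~\ref{thm:Disintegrate}. Alongside this I would flag the standing integrability and existence hypotheses, namely that $c_{t+1}^r$ be $\pi$-integrable so the conditional expectations are finite, and that the optimal kernels $\pi_{t+1}$ exist (the Brenier-type attainment result quoted before~\eqref{eq:9}); granted these, the martingale property for $(c_t^r)$ is immediate.
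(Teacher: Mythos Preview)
Your proposal is correct and follows essentially the same route as the paper: both arguments observe that the recursion~\eqref{eq:3} gives $c_t^r$ as the integral of $c_{t+1}^r$ against the optimal kernel $\pi_{t+1}(\cdot\times\cdot\mid x_{1:t},y_{1:t})$, and then note that since $\pi$ is built in~\eqref{eq:9} by gluing exactly these kernels, this integral coincides with the $\pi$-conditional expectation given $\mathcal{F}_t\otimes\mathcal{F}_t$. Your write-up is in fact more careful than the paper's, which condenses the identification of the regular conditional law and the integrability/attainment caveats into the single sentence ``the assertion follows as the measure $\pi$ in~\eqref{eq:9} combines these optimal, conditional measures.''
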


\begin{proof}
By definition of the process $c_{t}$ in~\eqref{eq:3} we have that
\[
c_{t-1}(x_{1:T},y_{1:T})^{r}=\iint_{\Xi_{t}\times\Xi_{t}}c_{t}(x_{1:T},y_{1:T})^{r}\pi(\mathrm{d}x_{t},\mathrm{d}y_{t}\mid x_{1:t-1},y_{1:t-1}),
\]
where $\pi(\cdot,\cdot\mid x_{1:t-1},y_{1:t-1})$ is the measure with
marginals $P(\cdot\mid x_{1:t-1})$ and $\tilde{P}(\cdot\mid y_{1:t-1})$,
resp.,\ for which the Wasserstein distance attains the infimum in~\eqref{eq:3}.
This is the conditional martingale property for the fibers $(x_{1:t-1},y_{1:t-1})$.
The assertion follows as the measure $\pi$ in~\eqref{eq:9} combines
these optimal, conditional measures.
\end{proof}
\begin{cor}[Alternative characterization]
The nested distance is given by 
\[
\nd_{r}\bigl(P,\tilde{P}\bigr)=\inf_{\pi}\left(\E_{\pi}d^{r}\right)^{\nicefrac{1}{r}}=\inf_{\pi}\left(\iint_{\Xi\times\Xi}d(x,y)^{r}\pi(\mathrm{d}x,\mathrm{d}y)\right)^{\nicefrac{1}{r}},
\]
where the infimum is among all probability measures $\pi\in\mathcal{P}(\Xi\times\Xi)$
satisfying the conditional marginal constraints~\eqref{eq:44}\textendash \eqref{eq:45}.
The infimum is attained for the measure $\pi$ defined in~\eqref{eq:9}.
\end{cor}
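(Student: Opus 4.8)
The plan is to prove the claimed identity through two one-sided bounds together with an attainment statement. Writing $\nd_{r}(P,\tilde{P})=c_{0}$ and recalling from~\eqref{eq:Dist} that $c_{T}=d$, I would first dispatch attainment quickly from the martingale characterization of Theorem~\ref{thm:Martingale}, and then establish the genuinely harder direction, namely the lower bound $\bigl(\E_{\pi}d^{r}\bigr)^{\nicefrac{1}{r}}\ge\nd_{r}(P,\tilde{P})$ valid for \emph{every} feasible $\pi$, by a backward induction along the coordinate filtration.

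For attainment, let $\pi$ be the measure constructed in~\eqref{eq:9}. The preceding proposition shows that $\pi$ satisfies the conditional marginal constraints~\eqref{eq:44}\textendash\eqref{eq:45}, hence $\pi$ is admissible in the infimum. Theorem~\ref{thm:Martingale} states that $(c_{t}^{r})_{t=0}^{T}$ is a $\pi$\nobreakdash-martingale, so taking unconditional expectations and using that $c_{0}$ is a deterministic number gives $c_{0}^{r}=\E_{\pi}c_{0}^{r}=\E_{\pi}c_{T}^{r}=\E_{\pi}d^{r}$. Therefore $\bigl(\E_{\pi}d^{r}\bigr)^{\nicefrac{1}{r}}=c_{0}=\nd_{r}(P,\tilde{P})$, so this particular $\pi$ realizes the value $\nd_{r}(P,\tilde{P})$ and the infimum is attained once the lower bound is secured.

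For the lower bound I fix any $\pi\in\mathcal{P}(\Xi\times\Xi)$ obeying~\eqref{eq:44}\textendash\eqref{eq:45} and introduce the $\pi$\nobreakdash-conditional process $\hat{c}_{t}^{r}:=\E_{\pi}\bigl(d^{r}\mid\mathcal{F}_{t}\otimes\mathcal{F}_{t}\bigr)$, so that $\hat{c}_{T}=d=c_{T}$ and $\hat{c}_{0}^{r}=\E_{\pi}d^{r}$. Since $\hat{c}_{t}^{r}$ is $\mathcal{F}_{t}\otimes\mathcal{F}_{t}$\nobreakdash-measurable, the tower property reduces conditioning one stage back to a single integral against the one-step kernel $\pi_{t}(\cdot\times\cdot\mid x_{1:t-1},y_{1:t-1})$ obtained by disintegrating $\pi$, giving $\hat{c}_{t-1}^{r}=\iint_{\Xi_{t}\times\Xi_{t}}\hat{c}_{t}^{r}\,\pi_{t}(\mathrm{d}x_{t},\mathrm{d}y_{t}\mid x_{1:t-1},y_{1:t-1})$. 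I then prove $\hat{c}_{t}\ge c_{t}$ holds $\pi$\nobreakdash-a.s.\ by backward induction: equality at $t=T$ is the base case, and in the inductive step monotonicity of the integral together with the defining equation~\eqref{eq:3} of $c_{t-1}$ as a Wasserstein infimum yields $\hat{c}_{t-1}^{r}\ge\iint c_{t}^{r}\,\pi_{t}\ge w_{r}\bigl(P_{t}(\cdot\mid x_{1:t-1}),\tilde{P}_{t}(\cdot\mid y_{1:t-1});c_{t}\bigr)^{r}=c_{t-1}^{r}$. Evaluating at $t=0$ then produces $\E_{\pi}d^{r}=\hat{c}_{0}^{r}\ge c_{0}^{r}=\nd_{r}(P,\tilde{P})^{r}$, which is exactly the desired bound.

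The step demanding the most care, and the only place the constraints~\eqref{eq:44}\textendash\eqref{eq:45} are genuinely invoked, is the second inequality above: it presupposes that the disintegration kernel $\pi_{t}(\cdot\times\cdot\mid x_{1:t-1},y_{1:t-1})$ is an \emph{admissible} transport plan between $P_{t}(\cdot\mid x_{1:t-1})$ and $\tilde{P}_{t}(\cdot\mid y_{1:t-1})$, so that it competes in the infimum defining $w_{r}$. To verify this I would read off the coordinate marginals of the kernel: evaluating~\eqref{eq:44} at stage $t-1$ against cylinder sets $A$ depending only on the coordinate $x_{t}$ forces the $x_{t}$\nobreakdash-marginal of $\pi_{t}(\cdot\times\cdot\mid x_{1:t-1},y_{1:t-1})$ to coincide (for $\pi$\nobreakdash-a.e.\ fiber) with $P_{t}(\cdot\mid x_{1:t-1})$, and symmetrically~\eqref{eq:45} pins the $y_{t}$\nobreakdash-marginal to $\tilde{P}_{t}(\cdot\mid y_{1:t-1})$. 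This marginal bookkeeping, plus the routine observation that $\pi$\nobreakdash-a.s.\ inequalities descend to $\pi_{t}$\nobreakdash-a.s.\ inequalities for almost every conditioning fiber, is the only measure-theoretic subtlety; with it in place the two displayed inequalities close the lower bound, and combined with the attainment step the infimum equals $\nd_{r}(P,\tilde{P})$ and is attained at the measure~\eqref{eq:9}.
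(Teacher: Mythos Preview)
Your proposal is correct and follows essentially the same approach as the paper: both establish the lower bound by observing that the conditional marginal constraints~\eqref{eq:44}\textendash\eqref{eq:45} force the one-step disintegration kernels of any feasible $\pi$ to be admissible transport plans in the Wasserstein infimum~\eqref{eq:3}, and both obtain attainment from the martingale property of Theorem~\ref{thm:Martingale} applied to the measure~\eqref{eq:9}. The paper compresses the lower bound into a single sentence, whereas you spell out the backward induction via the auxiliary process $\hat{c}_{t}^{r}=\E_{\pi}(d^{r}\mid\mathcal{F}_{t}\otimes\mathcal{F}_{t})$ and make the marginal verification explicit; this added detail is helpful but not a different route.
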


\begin{proof}
Let $\pi(\cdot\mid\cdot)$ satisfy the marginals \eqref{eq:44}\textendash \eqref{eq:45}.
Then every conditional measure $\pi(\cdot,\cdot\mid x_{1:t-1},y_{1:t-1})$
satisfies the constraints~\eqref{eq:11}\textendash \eqref{eq:12}
to compute the Wasserstein distance. It follows that $\nd_{r}(P,\tilde{P})^{r}\le\E_{\pi}d^{r}$.

The measure $\pi$ defined in~\eqref{eq:9} satisfies the constraints~\eqref{eq:44}\textendash \eqref{eq:45}
as well. However, we have from Theorem~\ref{thm:Martingale} that
$c_{t}^{r}$ is a martingale. The assertion follows from the power
property of the conditional expectation, as $c_{T}^{r}=d^{r}$ and
\begin{align*}
\nd_{r}(P,\tilde{P})^{r}=c_{0}^{r} & =\E_{\pi}\left(\dots\E_{\pi}\left(c_{t+1}^{r}\mid\mathcal{F}_{t}\otimes\mathcal{F}_{t}\right)\dots\mid\mathcal{F}_{1}\otimes\mathcal{F}_{1}\right)\\
 & =\E_{\pi}\left(\dots\E_{\pi}\left(\dots\E_{\pi}\left(d^{r}\mid\mathcal{F}_{T}\otimes\mathcal{F}_{T}\right)\dots\mid\mathcal{F}_{t}\otimes\mathcal{F}_{t}\right)\dots\mid\mathcal{F}_{1}\otimes\mathcal{F}_{1}\right)\\
 & =\E_{\pi}d^{r};
\end{align*}
hence the result.
\end{proof}
For additive cost functions the distance of the individual stages
have to be taken care of. The following corollary describes the process
in analogy to Proposition~\ref{prop:Additive} above.
\begin{cor}[Additive cost functions]
Let $\pi(\cdot,\cdot)$ be the optimal measure~\eqref{eq:9} and
$c_{T}$ the additive cost function~\eqref{eq:lr} for $r\ge1$.
Then the process
\[
\tilde{c}_{t}^{r}+\sum_{j=1}^{t-1}d_{j}^{r}
\]
is a martingale with respect to the measure $\pi$ (cf.~\eqref{eq:Pflug}).
\end{cor}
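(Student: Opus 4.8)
The plan is to observe that the process in question is nothing but the cost process $c_t^r$ in disguise, so that the assertion reduces immediately to the martingale characterization already established in Theorem~\ref{thm:Martingale}. Concretely, I would start from the definition~\eqref{eq:Pflug} of $\tilde{c}_t$, which rearranges to $\tilde{c}_t^r = c_t^r - \sum_{j=1}^{t-1} d_j^r$. Adding back the accumulated costs yields the identity
\[
\tilde{c}_t^r + \sum_{j=1}^{t-1} d_j^r = c_t^r \qquad (0\le t\le T),
\]
so the process under consideration coincides pathwise with $(c_t^r)_t$. Since Theorem~\ref{thm:Martingale} asserts that $(c_t^r)_t$ is a martingale with respect to $\pi$ and the canonical filtration $\mathcal{F}\otimes\mathcal{F}$, the claim follows at once.

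For a self-contained verification that does not route through $c_t^r$, I would instead argue directly from the recursion~\eqref{eq:9-1}. Writing $M_t:=\tilde{c}_t^r+\sum_{j=1}^{t-1}d_j^r$, the recursion gives $\tilde{c}_{t-1}^r=d_{t-1}^r+w_r\bigl(P_t(\cdot\mid x_{1:t-1}),\tilde{P}_t(\cdot\mid y_{1:t-1});\tilde{c}_t\bigr)^r$, and the Wasserstein term equals $\E_\pi\bigl(\tilde{c}_t^r\mid\mathcal{F}_{t-1}\otimes\mathcal{F}_{t-1}\bigr)$, because the optimal kernels $\pi_t(\cdot,\cdot\mid x_{1:t-1},y_{1:t-1})$ glued together in~\eqref{eq:9} are precisely the conditional measures attaining the infimum. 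Substituting and using that $\sum_{j=1}^{t-1}d_j^r$ is $\mathcal{F}_{t-1}\otimes\mathcal{F}_{t-1}$-measurable then gives $\E_\pi\bigl(M_t\mid\mathcal{F}_{t-1}\otimes\mathcal{F}_{t-1}\bigr)=M_{t-1}$, which is the martingale property.

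The only point requiring care, and it is a minor one, is the measurability bookkeeping: each $d_j^r=d_j(x_j,y_j)^r$ is $\mathcal{F}_j\otimes\mathcal{F}_j$-measurable, hence $\sum_{j=1}^{t-1}d_j^r$ is $\mathcal{F}_{t-1}\otimes\mathcal{F}_{t-1}$-measurable and may be pulled out of the conditional expectation at stage $t-1$. I do not expect any genuine obstacle; the substance of the statement was already discharged in Proposition~\ref{prop:Additive} (which supplies the recursion~\eqref{eq:9-1} and the identity above) and in Theorem~\ref{thm:Martingale} (which supplies the martingale property of $c_t^r$). In effect the corollary records the decomposition of the martingale $c_t^r$ into the reduced quantity $\tilde{c}_t^r$ plus the adapted, stagewise-accumulating cost $\sum_{j=1}^{t-1}d_j^r$.
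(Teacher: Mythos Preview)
Your proposal is correct and follows exactly the paper's approach: rearrange~\eqref{eq:Pflug} to the identity $\tilde{c}_t^r+\sum_{j=1}^{t-1}d_j^r=c_t^r$ and then invoke Theorem~\ref{thm:Martingale}. The alternative direct verification you sketch is also sound but unnecessary, since the first argument already settles the claim in one line.
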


\begin{proof}
This is immediate as $\tilde{c}_{t}^{r}=c_{t}^{r}-\sum_{j=1}^{t-1}d_{j}^{r}$
by definition of the process~\eqref{eq:Pflug} and as $c_{t}$ is
a martingale by Theorem~\ref{thm:Martingale}.
\end{proof}

\section{\label{sec:ContinuityRisk}Continuity properties}

The risk functionals defined in~\eqref{eq:Rsigma} above are continuous
with respect to the Wasserstein distance. We generalize the results
here and verify that nested risk functionals are continuous with respect
to the nested distance. This section elaborates the modulus of continuity.
\begin{prop}[Continuity of risk functionals]
\label{prop:Spectal} Let $\mathcal{R}_{\mathcal{S}}$ be a general
risk functional according~\eqref{eq:Risk}. Suppose that the random
variables $Y$, $\tilde{Y}\colon\Xi\to\mathbb{R}$ satisfy
\begin{equation}
Y(x)-\tilde{Y}(y)\le L\cdot d(x,y)^{\beta}\label{eq:Cond}
\end{equation}
for some $\beta\le1$. Then 
\begin{align*}
\mathcal{R}_{\mathcal{S};P}(Y)-\mathcal{R}_{\mathcal{S};\tilde{P}}(\tilde{Y}) & \le L\cdot\sup_{\sigma\in\mathcal{S}}\left\Vert \sigma\right\Vert _{q}\cdot w_{\beta r}(P,\tilde{P})^{\beta}\\
 & \le L\cdot\sup_{\sigma\in\mathcal{S}}\left\Vert \sigma\right\Vert _{q}\cdot w_{r}(P,\tilde{P})^{\beta},
\end{align*}
where $q\in(1,\infty]$ is the Hölder conjugate exponent of $r$ (the
order of the Wasserstein metric) for which $\frac{1}{q}+\frac{1}{r}=1$.
\end{prop}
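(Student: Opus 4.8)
The plan is to prove the continuity estimate by working through the dual (Kusuoka) representation~\eqref{eq:Risk} and exploiting the Lipschitz-type bound~\eqref{eq:Cond}. The key observation is that the difference of risk functionals can be controlled by evaluating both functionals against a single, carefully chosen transport plan. First I would fix an arbitrary $\sigma\in\mathcal{S}$ and an admissible density $\zeta\ge0$ with $\E_P\zeta=1$ satisfying the $\AVaR$-constraint, so that $\mathcal{R}_{\sigma;P}(Y)=\E_P Y\zeta$ (or at least is approximated by such a $\zeta$ up to $\varepsilon$). The goal is to compare this with $\mathcal{R}_{\mathcal{S};\tilde P}(\tilde Y)$; the asymmetry between $P$ and $\tilde P$ is precisely what the transport plan $\pi$ with marginals $P$ and $\tilde P$ is designed to bridge.

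Next I would introduce the coupling: let $\pi\in\mathcal{P}(\Xi\times\Xi)$ be (nearly) optimal for the Wasserstein distance $w_{\beta r}(P,\tilde P)$, with marginals $P$ and $\tilde P$. Writing $\zeta$ as a function on the first coordinate, I would push the density through the coupling and use~\eqref{eq:Cond} to estimate
\[
\E_P Y\zeta - \E_{\tilde P}\tilde Y\tilde\zeta \le \E_\pi\bigl(Y(x)-\tilde Y(y)\bigr)\zeta \le L\cdot\E_\pi d(x,y)^{\beta}\zeta,
\]
where $\tilde\zeta$ denotes the transported density (defined so that $\E_{\tilde P}\tilde Y\tilde\zeta$ remains an admissible dual element for $\mathcal{R}_{\mathcal{S};\tilde P}$). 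I would then apply H\"older's inequality to the right-hand side: $\E_\pi d^{\beta}\zeta\le\bigl(\E_\pi d^{\beta r}\bigr)^{1/r}\bigl(\E_\pi \zeta^{q}\bigr)^{1/q}$, where $q$ is the conjugate exponent. The first factor is exactly $w_{\beta r}(P,\tilde P)^{\beta}$ after taking the infimum over $\pi$, and the second factor $\bigl(\E\zeta^{q}\bigr)^{1/q}=\Vert\zeta\Vert_q$ must be bounded by $\Vert\sigma\Vert_q$ using the $\AVaR$-constraint in~\eqref{eq:RDual}. Taking the supremum over $\sigma\in\mathcal{S}$ and $\zeta$ then yields the first inequality; the second follows from the monotonicity $w_{\beta r}\le w_r$ (H\"older again, since $\beta\le1$ forces $\beta r\le r$).

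The main obstacle I anticipate is the bookkeeping around the density transport: one must verify that the admissibility constraints on $\zeta$ (in particular the $\AVaR_{\alpha}(\zeta)$ bound) translate correctly into the bound $\Vert\zeta\Vert_q\le\Vert\sigma\Vert_q$, and that the transported density $\tilde\zeta$ remains a valid competitor for $\mathcal{R}_{\mathcal{S};\tilde P}$. The link between the spectral/$\AVaR$-constraint and the $L^q$-norm of $\zeta$ is the crux: by the Kusuoka-type representation the density $\zeta$ is dominated (in the sense of the Lorenz/stochastic order encoded by the $\AVaR$-inequalities) by the distortion $\sigma$, and for spectral risk measures the extremal density is exactly the monotone rearrangement of $\sigma$, whose $L^q$-norm is $\Vert\sigma\Vert_q$. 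Making this domination yield the sharp $L^q$-bound is where I expect the real work to lie; the transport and H\"older steps are then essentially mechanical.
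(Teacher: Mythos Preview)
Your overall strategy---dual (Kusuoka) representation, coupling via a transport plan $\pi$, the pointwise bound~\eqref{eq:Cond}, then H\"older---matches the paper's proof. There is one unnecessary detour in your plan, however: you propose to construct a \emph{transported density} $\tilde\zeta$ on the $\tilde P$-side and then verify that it remains admissible for $\mathcal{R}_{\mathcal{S};\tilde P}$. The paper sidesteps this entirely. Instead of transporting $\zeta$, it lifts the whole computation to the product space $(\Xi\times\Xi,\pi)$: since $\zeta$ depends only on the first coordinate and the first marginal of $\pi$ is $P$, law invariance makes $\zeta$ feasible in the dual representation of $\mathcal{R}_{\mathcal{S};\pi}$; and since $\tilde Y$ depends only on the second coordinate and the second marginal of $\pi$ is $\tilde P$, law invariance gives $\mathcal{R}_{\mathcal{S};\tilde P}(\tilde Y)=\mathcal{R}_{\mathcal{S};\pi}(\tilde Y)\ge\E_\pi\tilde Y\,\zeta$ directly. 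No transported density, no separate admissibility check. Your version would also go through (the transported density $\tilde\zeta=\E_\pi[\zeta\mid y]$ is admissible by dilatation monotonicity of law-invariant coherent risk measures), but the paper's route is shorter and eliminates precisely the bookkeeping you flagged as the ``main obstacle.''

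On the step $\|\zeta\|_q\le\|\sigma\|_q$: you are right that this is where the $\AVaR$-constraint enters, but it is not the heart of the argument. The paper dispatches it in one line by taking $\sigma:=F_\zeta^{-1}\in\mathcal{S}$, so $\|\zeta\|_q=\|F_\zeta^{-1}\|_q=\|\sigma\|_q$. If one prefers not to assume the near-optimal $\zeta$ is of this extremal form, the inequality $\|\zeta\|_q\le\|\sigma\|_q$ still follows from the constraint $\int_\alpha^1 F_\zeta^{-1}(u)\,\mathrm{d}u\le\int_\alpha^1\sigma(u)\,\mathrm{d}u$ for all $\alpha$ via Hardy--Littlewood--P\'olya majorization; either way the step is short.
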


\begin{proof}
Let $\zeta\ge0$ with $\E\zeta=1$ be chosen so that the supremum
in~\eqref{eq:Risk} is attained up to $\varepsilon>0$, i.e., $\E Y\zeta>\mathcal{R}_{\mathcal{S};P}(Y)-\varepsilon$.
Let $\pi$ have marginals $P$ and $\tilde{P}$. Note that $\E_{\pi}\zeta=\E_{P}\zeta=1$,
so that 
\[
\mathcal{R}_{\mathcal{S};\tilde{P}}(\tilde{Y})=\mathcal{R}_{\mathcal{S};\pi}(\tilde{Y})\ge\E_{\pi}\tilde{Y}\zeta.
\]
It follows from Hölder's inequality that 
\begin{align}
\mathcal{R}_{\mathcal{S};P}(Y)-\varepsilon-\mathcal{R}_{\mathcal{S};\tilde{P}}(\tilde{Y}) & \le\iint_{\Xi\times\Xi}\left(Y(x)-\tilde{Y}(y)\right)\zeta(x)\pi(\mathrm{d}x,\mathrm{d}y)\nonumber \\
 & \le L\iint_{\Xi\times\Xi}d\left(x,y\right)^{\beta}\zeta(x)\pi(\mathrm{d}x,\mathrm{d}y)\nonumber \\
 & \le L\left(\iint_{\Xi\times\Xi}d\left(x,y\right)^{\beta r}\pi(\mathrm{d}x,\mathrm{d}y)\right)^{\nicefrac{1}{r}}\left(\E\zeta^{q}\right)^{\nicefrac{1}{q}}.\label{eq:19}
\end{align}
Now note that $\left(\E\zeta^{q}\right)^{\nicefrac{1}{q}}=\left\Vert \sigma\right\Vert _{q}$
where $\sigma(\cdot):=F_{\zeta}^{-1}(\cdot)\in\mathcal{S}$ is the
generalized inverse distribution function. We obtain the desired result
by taking the infimum in~\eqref{eq:19} over all possible measures
with marginals $P$ and $\tilde{P}$ and after letting $\varepsilon\to0$.

For the remaining inequality observe that 
\[
\left(\E_{\pi}d^{\beta r}\right)^{\nicefrac{1}{\beta r}}=\left\Vert d\right\Vert _{\beta r}\le\left\Vert d\right\Vert _{r}=\left(\E_{\pi}d^{r}\right)^{\nicefrac{1}{r}}
\]
by Hölder's inequality, so that 
\[
\eqref{eq:19}\le L\left(\iint_{\Xi\times\Xi}d\left(x,y\right)^{r}\pi(\mathrm{d}x,\mathrm{d}y)\right)^{\nicefrac{\beta}{r}}\cdot\sup_{\sigma\in\mathcal{S}}\left\Vert \sigma\right\Vert _{q}.
\]
This is the assertion.
\end{proof}
\begin{cor}[Continuity of the Average Value-at-Risk]
\label{prop:AVaR}Suppose that $Y(x)-\tilde{Y}(y)\le L\cdot d(x,y)$.
Then 
\[
\AVaR_{\alpha;P}(Y)-\AVaR_{\alpha;\tilde{P}}(\tilde{Y})\le\frac{L}{1-\alpha}w\left(P,\tilde{P};\,d\right).
\]
\end{cor}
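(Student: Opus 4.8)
The plan is to recognize the Average Value-at-Risk as the elementary instance of the general risk functional $\mathcal{R}_{\mathcal{S}}$ from~\eqref{eq:Risk} and then simply specialize Proposition~\ref{prop:Spectal}. By Example~\ref{exa:AVaR}, the Kusuoka representation of $\AVaR_{\alpha}$ is carried by the singleton family $\mathcal{S}=\left\{ \sigma_{\alpha}\right\}$, where $\sigma_{\alpha}(u)=\frac{1}{1-\alpha}$ for $u\ge\alpha$ and $\sigma_{\alpha}(u)=0$ otherwise. Hence $\AVaR_{\alpha;P}=\mathcal{R}_{\mathcal{S};P}$ for this particular $\mathcal{S}$, and the corollary is nothing but Proposition~\ref{prop:Spectal} read off for this choice.

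First I would match the parameters. The hypothesis $Y(x)-\tilde{Y}(y)\le L\cdot d(x,y)$ is condition~\eqref{eq:Cond} with exponent $\beta=1$, and the target distance $w(P,\tilde{P};\,d)$ is the Wasserstein metric of order $r=1$, whose H\"older conjugate is $q=\infty$. Inserting $\beta=1$, $r=1$, $q=\infty$, and $\mathcal{S}=\left\{ \sigma_{\alpha}\right\}$ into the conclusion of Proposition~\ref{prop:Spectal} gives
\[
\AVaR_{\alpha;P}(Y)-\AVaR_{\alpha;\tilde{P}}(\tilde{Y})\le L\cdot\sup_{\sigma\in\mathcal{S}}\left\Vert \sigma\right\Vert _{\infty}\cdot w(P,\tilde{P};\,d)=L\cdot\left\Vert \sigma_{\alpha}\right\Vert _{\infty}\cdot w(P,\tilde{P};\,d).
\]

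It then remains to compute the norm $\left\Vert \sigma_{\alpha}\right\Vert _{\infty}$. Since $\sigma_{\alpha}$ attains only the two values $0$ and $\frac{1}{1-\alpha}$, its essential supremum on $[0,1)$ equals $\frac{1}{1-\alpha}$, which yields exactly the claimed bound $\frac{L}{1-\alpha}w(P,\tilde{P};\,d)$. There is no genuine obstacle here: the result is a direct corollary, and the only point demanding any care is the bookkeeping of the conjugate exponents ($r=1$ forcing $q=\infty$ rather than some finite value) together with the trivial evaluation of the $L^{\infty}$-norm of the indicator-type distortion $\sigma_{\alpha}$.
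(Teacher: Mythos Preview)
Your proposal is correct and follows exactly the same approach as the paper's own proof, which simply states that the result is a special case of Proposition~\ref{prop:Spectal} for $r=1$ and $q=\infty$, citing Example~\ref{exa:AVaR}. Your version merely spells out in detail the computation $\left\Vert \sigma_{\alpha}\right\Vert_{\infty}=\frac{1}{1-\alpha}$ that the paper leaves implicit.
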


\begin{proof}
This is a special case of Proposition~\ref{prop:Spectal} for $r=1$
and $q=\infty$ (cf.\ Example~\ref{exa:AVaR}).
\end{proof}
\begin{thm}[Continuity of nested risk functionals]
\label{thm:Composition}Suppose that the random variables $Y\colon\Xi\to\mathbb{R}$
is Hölder continuous with constant~$L$ and exponent $\beta\le1$,
\[
\left|Y(x)-Y(y)\right|\le L\cdot d(x,y)^{\beta}.
\]
Then the nested risk functional $\mathcal{R}_{\mathcal{S}_{1:T}}(Y)$
is continuous with respect to the nested distance, it holds that \textup{
\[
\left|\mathcal{R}_{\mathcal{S}_{1:T};P}(Y)-\mathcal{R}_{\mathcal{S}_{1:T};\tilde{P}}(Y)\right|\le\sup_{\sigma\in\mathcal{S}_{t},\,t=1,\dots T}\left\Vert \sigma_{1}\right\Vert _{q}\cdot\dots\left\Vert \sigma_{T}\right\Vert _{q}\cdot L\cdot\nd_{r}\left(P,\tilde{P}\right)^{\beta}.
\]
}
\end{thm}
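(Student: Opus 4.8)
The plan is to propagate the single-stage estimate of Proposition~\ref{prop:Spectal} through the backward recursion~\eqref{eq:5-1} defining the nested functional, with the cost process $(c_t)_{t=0}^T$ serving as the running modulus of continuity. Write $M_t := \sup_{\sigma\in\mathcal{S}_t}\|\sigma\|_q$, so that the constant claimed in the theorem is $L\prod_{t=1}^T M_t$, and abbreviate the intermediate functionals by
\[
V_t(x_{1:t}) := \mathcal{R}_{\mathcal{S}_{t+1:T};P}(Y\mid x_{1:t}), \qquad \tilde V_t(y_{1:t}) := \mathcal{R}_{\mathcal{S}_{t+1:T};\tilde P}(Y\mid y_{1:t}).
\]
I would establish, by backward induction on $t$ from $T$ down to $0$, the fiberwise bound
\[
V_t(x_{1:t}) - \tilde V_t(y_{1:t}) \le L\Bigl(\prod_{s=t+1}^T M_s\Bigr)\, c_t(x_{1:T},y_{1:T})^\beta,
\]
whose right-hand side is $\mathcal{F}_t\otimes\mathcal{F}_t$-measurable because $c_t$ depends on $(x_{1:t},y_{1:t})$ only.

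For the base case $t=T$ the functional over the empty range is the identity, the empty product equals $1$, and $c_T=d$ by~\eqref{eq:Dist}, so the bound reduces to the H\"older hypothesis $Y(x)-Y(y)\le L\,d(x,y)^\beta$. For the inductive step I would freeze the fibers $x_{1:t-1}$, $y_{1:t-1}$ and use the recursion~\eqref{eq:recursive} to write $V_{t-1}(x_{1:t-1}) = \mathcal{R}_{\mathcal{S}_t;P(\cdot\mid x_{1:t-1})}(V_t)$, where the outer functional acts on the $\Xi_t$-random variable $x_t\mapsto V_t(x_{1:t-1},x_t)$, and analogously for $\tilde V_{t-1}$. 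With the fibers frozen, the inductive hypothesis is precisely the comparison condition~\eqref{eq:Cond} required by Proposition~\ref{prop:Spectal}, with the metric $d$ replaced by the cost $c_t$ (read as a function of $(x_t,y_t)$) and the constant $L\prod_{s=t+1}^T M_s$. Since the Wasserstein metric~\eqref{eq:1} is defined for an arbitrary cost function (Remark~\ref{rem:Wasserstein}), the argument of Proposition~\ref{prop:Spectal} applies verbatim with $c_t$ in place of $d$ and delivers
\[
V_{t-1}(x_{1:t-1}) - \tilde V_{t-1}(y_{1:t-1}) \le L\Bigl(\prod_{s=t+1}^T M_s\Bigr) M_t\, w_r\bigl(P_t(\cdot\mid x_{1:t-1}),\,\tilde P_t(\cdot\mid y_{1:t-1});\,c_t\bigr)^\beta.
\]
The recursive definition~\eqref{eq:3} identifies the Wasserstein distance on the right with $c_{t-1}(x_{1:T},y_{1:T})$, which is the inductive claim at stage $t-1$. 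Evaluating at $t=0$ turns the left-hand side into the deterministic difference $\mathcal{R}_{\mathcal{S}_{1:T};P}(Y)-\mathcal{R}_{\mathcal{S}_{1:T};\tilde P}(Y)$ and produces $c_0=\nd_r(P,\tilde P)$ by~\eqref{eq:8}; exchanging the roles of $P$ and $\tilde P$, legitimate because the hypothesis on $Y$ is symmetric, upgrades this one-sided estimate to the stated absolute value.

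The step I expect to demand the most care is the inductive step: verifying that Proposition~\ref{prop:Spectal} may indeed be invoked with the generic cost $c_t$ rather than the metric $d$, and tracking that the factor $M_t$ from $\mathcal{S}_t$ multiplies cleanly onto the accumulated product while the Wasserstein output collapses to $c_{t-1}$ via~\eqref{eq:3}. A secondary point is to confirm that each intermediate functional $V_t$ is measurable and lies in the domain of the conditional risk functional, which is guaranteed by~\eqref{eq:10} together with the lower semicontinuity of the cost functions.
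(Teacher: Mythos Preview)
Your proposal is correct and follows essentially the same approach as the paper: a backward induction that feeds Proposition~\ref{prop:Spectal} with the cost process $c_t$ in place of the metric $d$, collapsing the resulting Wasserstein term to $c_{t-1}$ via~\eqref{eq:3}, and finally symmetrizing in $P,\tilde P$. Your write-up is in fact slightly more explicit than the paper's about the accumulated constant $\prod_{s=t+1}^T M_s$ and the measurability of the intermediate $V_t$.
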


\begin{proof}
We infer from Proposition~\ref{prop:Spectal} with $\tilde{Y}=Y$
that 
\begin{align}
\mathcal{R}_{\mathcal{S}_{T};P\left(\cdot|\,x_{1:T-1}\right)}(Y)- & \mathcal{R}_{\mathcal{S}_{T};\tilde{P}\left(\cdot|\,y_{1:T-1}\right)}(Y)\nonumber \\
 & \le L\cdot\sup_{\sigma_{T}\in\mathcal{S}_{T}}\left\Vert \sigma_{T}\right\Vert _{q}\cdot w_{r}\left(P\left(\cdot|\,x_{1:T-1}\right),\tilde{P}\left(\cdot|\,y_{1:T-1}\right);\,c_{T}\right)^{\beta},\label{eq:21}
\end{align}
where the terminal cost function is the distance as in the definition
of the nested distance (cf.~\eqref{eq:Dist}),
\[
c_{T}=d.
\]
Define the random variables 
\[
Y_{T-1}(x_{1:T-1}):=\mathcal{R}_{\mathcal{S}_{T};P\left(\cdot|\,x_{1:T-1}\right)}(Y)\quad\text{and \quad}\tilde{Y}_{T-1}(y_{1:T-1}):=\mathcal{R}_{\mathcal{S}_{T};\tilde{P}\left(\cdot|\,y_{1:T-1}\right)}(Y),
\]
so that we have
\[
Y_{T-1}(x_{1:T-1})-\tilde{Y}_{T-1}(y_{1:T-1})\le L\cdot\sup_{\sigma_{T}\in\mathcal{S}_{T}}\left\Vert \sigma_{T}\right\Vert _{q}\cdot c_{T-1}\left(x_{1:T},\,y_{1:T}\right)^{\beta}
\]
by~\eqref{eq:21} and the definition of the process $c_{t}$ in~\eqref{eq:3}.
The random variables $Y_{T-1}$ and $\tilde{Y}_{T-1}$ thus satisfy
the condition~\eqref{eq:Cond} with respect to the cost function
$c_{T-1}$. So we may again apply Proposition~\ref{prop:Spectal}
to the measures $P\left(\cdot\mid x_{1:T-2}\right)$ and $\tilde{P}\left(\cdot\mid y_{1:T-2}\right)$
and repeating this procedure for $t=T-2$ down to $t=0$ gives 
\[
\mathcal{R}_{\mathcal{S}_{1:T};P}(Y)-\mathcal{R}_{\mathcal{S}_{1:T};\tilde{P}}(Y)\le\sup_{\sigma_{t}\in\mathcal{S}_{t},\,t=1,\dots T}\left\Vert \sigma_{1}\right\Vert _{q}\cdot\dots\left\Vert \sigma_{T}\right\Vert _{q}\cdot L\cdot c_{0}^{\beta},
\]
with terminal cost function $c_{T}=d$. We have that $c_{0}=\nd_{r}\left(P,\tilde{P}\right)$
and thus 
\[
\mathcal{R}_{\mathcal{S}_{1:T};P}(Y)-\mathcal{R}_{\mathcal{S}_{1:T};\tilde{P}}(Y)\le\sup_{\sigma_{t}\in\mathcal{S}_{t},\,t=1,\dots T}\left\Vert \sigma_{1}\right\Vert _{q}\cdot\dots\left\Vert \sigma_{T}\right\Vert _{q}\cdot L\cdot\nd_{r}\left(P,\tilde{P}\right)^{\beta}.
\]

The result follows finally by exchanging the probability measures
$P$ and $\tilde{P}$.
\end{proof}
\begin{cor}[Continuity of the nested Average Value-at-Risk]
Suppose that $Y$ is Lipschitz continuous with constant $L$. Then
the nested Average Value-at-Risk, $\nAVaR$, is continuous with respect
to the nested distance $\nd$. More precisely, it holds that 
\[
\left|\nAVaR_{\alpha_{1:T};P}(Y)-\nAVaR_{\alpha_{1:T};\tilde{P}}(Y)\right|\le\frac{L}{1-\alpha}\nd_{r}\left(P,\tilde{P}\right)
\]
for every $r\ge1$, where $\alpha\ge1-(1-\alpha_{1})\cdot\dots(1-\alpha_{T})$
(cf.~\eqref{eq:nAVaR}).
\end{cor}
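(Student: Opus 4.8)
The plan is to read the nested Average Value-at-Risk as the special instance of the nested risk functional~\eqref{eq:5-1} in which each collection of distortion functions is the singleton $\mathcal{S}_t=\{\sigma_{\alpha_t}\}$ associated with the Average Value-at-Risk at level $\alpha_t$ (cf.\ Example~\ref{exa:AVaR}). Under this identification $\nAVaR_{\alpha_{1:T};P}=\mathcal{R}_{\mathcal{S}_{1:T};P}$, so the statement reduces to a direct specialization of Theorem~\ref{thm:Composition} on the continuity of nested risk functionals, and in particular the absolute value on the left-hand side is inherited from there.

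First I would apply Theorem~\ref{thm:Composition} with Lipschitz exponent $\beta=1$ and Wasserstein order $r=1$, for which the H\"older conjugate is $q=\infty$. The only computation needed is the norm of the distortion function $\sigma_{\alpha_t}(u)=\frac{1}{1-\alpha_t}\,\one_{\{u\ge\alpha_t\}}$, whose essential supremum on $[0,1)$ is $\left\Vert\sigma_{\alpha_t}\right\Vert_\infty=\frac{1}{1-\alpha_t}$. Inserting this into the product $\left\Vert\sigma_1\right\Vert_q\cdots\left\Vert\sigma_T\right\Vert_q$ of Theorem~\ref{thm:Composition} produces the factor $\prod_{t=1}^T(1-\alpha_t)^{-1}$ and hence the intermediate estimate
\[
\left|\nAVaR_{\alpha_{1:T};P}(Y)-\nAVaR_{\alpha_{1:T};\tilde{P}}(Y)\right|\le\frac{L}{\prod_{t=1}^T(1-\alpha_t)}\cdot\nd_1\!\left(P,\tilde{P}\right).
\]
Next I would upgrade the order: the monotonicity~\eqref{eq:5-2} gives $\nd_1(P,\tilde{P})\le\nd_r(P,\tilde{P})$ for every $r\ge1$, so the same bound holds with $\nd_r$ on the right. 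Finally I would use the hypothesis $\alpha\ge1-(1-\alpha_1)\cdots(1-\alpha_T)$, equivalently $1-\alpha\le\prod_{t=1}^T(1-\alpha_t)$, to replace the product factor by the larger constant $(1-\alpha)^{-1}$, which yields the asserted inequality.

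I do not anticipate a genuine obstacle, since the whole argument is a specialization of Theorem~\ref{thm:Composition} combined with two monotonicity facts. The points that deserve attention are choosing $r=1$ (rather than a general $r$, whose conjugate norm $\left\Vert\sigma_{\alpha_t}\right\Vert_q=(1-\alpha_t)^{-1/r}$ would carry an awkward fractional power and not combine into the clean product above) and checking that both inequalities used at the end---the nested-distance monotonicity~\eqref{eq:5-2} and the condition on $\alpha$---point in the direction that \emph{loosens} the estimate toward the stated form rather than against it.
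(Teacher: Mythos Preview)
Your proposal is correct and follows essentially the same route as the paper's proof: specialize Theorem~\ref{thm:Composition} with $\beta=1$ and $r=1$ (so $q=\infty$), use $\left\Vert\sigma_{\alpha_t}\right\Vert_\infty=(1-\alpha_t)^{-1}$ to obtain the product constant, then invoke the monotonicity~\eqref{eq:5-2} to pass to general $r\ge1$ and the hypothesis on $\alpha$ to reach the stated constant. Your write-up simply makes explicit the computations that the paper compresses into the phrase ``immediate by the definition of the nested Average Value-at-Risk, Corollary~\ref{prop:AVaR} and Theorem~\ref{thm:Composition}.''
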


\begin{proof}
The statement for $r=1$ is immediate by the definition of the nested
Average Value-at-Risk, Corollary~\ref{prop:AVaR} and Theorem~\ref{thm:Composition}.
The statement for general $r\ge1$ follows from~\eqref{eq:5-2}.
\end{proof}

\section{\label{sec:Martingale}Dynamic equations and the martingale property}

In what follows we consider multistage optimization problems with
cost function 
\[
Q\colon\mathcal{Z}_{0:T}\times\Xi_{1:T}\to\mathbb{R},
\]
where a sequence of subsequent decisions $z_{t}\in\mathcal{Z}_{t}$
, $t=0,\dots T,$ is chosen from $\mathcal{Z}_{0:T}=\mathcal{Z}_{0}\times\dots\times\mathcal{Z}_{T}$.
To account for risk-averse decision making under uncertainty we involve
risk functionals at each stage.
\begin{defn}[Policy]
The random variable $z_{t}\colon\Xi\to\mathcal{Z}_{t}$ is a random\emph{
policy} or \emph{decision} at time $t$, $t=0,\dots T$. The decision
$z_{t}$ is \emph{nonanticipative} (or \emph{adapted}) if $z_{t}\colon\Xi\to\mathcal{Z}_{t}$
is $\mathcal{F}_{t}$\nobreakdash-measurable for every $t=0,\dots T$,
abbreviated by $z_{t}\lhd\mathcal{F}_{t}$. The function $z\colon\Xi\to\mathcal{Z}_{1:T}$
with $z(x)_{t}:=z_{t}(x)$ is nonanticipative (adapted; in short,
$z\lhd\mathcal{F}$), if each component $z_{t}$ is nonanticipative
for every $t=0,\dots T$. 
\end{defn}

\begin{rem}
It is a consequence of the Doob\textendash Dynkin lemma that $z_{t}$
is nonanticipative if it depends solely on the information available
at time $t\in\{0,\dots,T\}$, i.e., if $z_{t}(x_{1:T})=\tilde{z}_{t}(x_{1:t})$
for some measurable function $\tilde{z}_{t}\colon\Xi_{1:t}\to\mathcal{Z}_{t}$
(cf.\ \citet[Lemma 1.13]{Kallenberg2002Foundations} or \citet[Theorem~II.4.3]{Shiryaev1996}).
As the filtration $\mathcal{F}=\big(\mathcal{F}_{t}\big)_{t=0}^{T}$
is the coordinate filtration it follows that every nonanticipativative
random decision $z\lhd\mathcal{F}$ can be written explicitly as 
\[
z_{0:T}(x_{1:T})=\begin{pmatrix}\begin{array}{l}
z_{0}\\
z_{1}(x_{1})\\
z_{2}(x_{1},x_{2})\\
\quad\vdots\\
z_{T}(x_{1},\dots,x_{T})
\end{array}\end{pmatrix}
\]
for adequate, measurable functions $z_{t}\colon\Xi_{0:t}\to\mathcal{Z}_{t}$.
\end{rem}

\begin{defn}[Multistage optimization]
\label{def:Multistage}Let $Q\colon\mathcal{Z}_{0:T}\times\Xi_{1:T}\to\mathbb{R}\cup\{\infty\}$
be a lsc.\ cost function. The risk-averse multistage optimization
problem is 
\begin{equation}
\inf_{z_{0:T}\lhd\mathcal{F}_{0:T}}\mathcal{R}_{\mathcal{S}_{1:T}}\Bigl(Q\bigl(z_{0:T}(\cdot);\cdot\bigr)\Bigr),\label{eq:multiStage}
\end{equation}
where the infimum is among all adapted policies $z\lhd\mathcal{F}$.
We emphasize and indicated the random component in~\eqref{eq:multiStage}
by `$\cdot$'.
\end{defn}

\begin{rem}
To avoid confusions or ambiguities regarding the arguments of the
function $Q$ we separate the arguments $z\in\mathcal{Z}_{0:T}$ and
$x\in\Xi_{1:T}$ explicitly and write $Q(z;x)$. This will turn out
helpful in what follows, for example in expressions as $Q(z_{0:t-1},z_{t:T};\,x_{1:t},x_{t+1:T})$.
\end{rem}

\begin{rem}
Constraints of the form $z_{0:t}(x_{1:t})\in\mathscr{Z}_{t}(x_{1:t})\subseteq\mathcal{Z}_{t}$
for some multifunction $\mathscr{Z}_{t}(\cdot)$ appear naturally
in applications involving optimization under uncertainty. They are
easily incorporated in the problem formulation~\eqref{eq:multiStage}
just by employing the function $Q(z_{0:T},x_{1.T})\cdot\one_{\mathscr{Z}_{t}(x_{1:t})}(z_{0:T})$
instead of $Q$. This setting is not advisable for real world implementations,
but convenient for the conceptual treatment envisaged here.
\end{rem}

The multistage problem~\eqref{eq:multiStage} thus consists in finding
optimal functions $z_{0},\,z_{1}(\cdot),\dots,z_{T}(\cdot)$ (only
$z_{0}$ is deterministic) and therefore can be considered as \emph{optimization
on function spaces}. 

\subsection{The essential infimum}

We shall make use of the following interchangeability principle, cf.\ also
\citet{ShapiroInterchangeability}. For $z\in\mathcal{Z}$ fixed,
the mapping $x\mapsto Q(z,x)$ is a random variable for which we write
$Q(z,\cdot)$. In what follows we discuss the expression $\inf_{z}Q(z,\cdot)$
and its measurability. We refer to \citet[Appendix~A]{Karatzas} for
a formal definition of the essential infimum $\essinf_{z\in\mathcal{Z}}Q(z,\cdot)$,
which is a measurable random variable as well. 
\begin{prop}
\label{prop:Essinf}Let $\mathcal{Z}$ be a vector space and consider
all policies with values $z(\cdot)\in\mathcal{Z}$. Then there exists
a sequence $z_{n}(\cdot)$ of simple functions so that
\begin{equation}
\lim_{n\to\infty}Q\big(z_{n}(\cdot),\cdot\big)=\essinf_{z(\cdot)\in\mathcal{Z}}Q(z(\cdot),\cdot)\qquad\text{almost surely}\label{eq:16}
\end{equation}
and $Q\big(z_{n}(\cdot),\cdot\big)$ is nonincreasing.
\end{prop}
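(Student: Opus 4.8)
The plan is to realize the essential infimum as the almost sure limit of a countable, downward directed subfamily and then to arrange that this subfamily consists of simple policies. I would first recall the standard fact (e.g.\ from \citet[Appendix~A]{Karatzas}) that for any nonempty family $\mathcal{Y}$ of extended real valued random variables there is a countable subfamily whose pointwise infimum equals $\essinf\mathcal{Y}$ almost surely, and that whenever $\mathcal{Y}$ is \emph{downward directed}---meaning that for any two members there is a third member dominated by their pointwise minimum---one may replace this subfamily by its running minima to obtain a \emph{nonincreasing} sequence converging almost surely to $\essinf\mathcal{Y}$. Applied to $\mathcal{Y}:=\bigl\{Q(z(\cdot),\cdot):z(\cdot)\in\mathcal{Z}\text{ measurable}\bigr\}$, this yields at once a nonincreasing, almost surely convergent sequence as required by~\eqref{eq:16}; the entire task therefore reduces to (a) verifying downward directedness and (b) upgrading the members to simple functions.

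For (a) I would use a pasting (switching) argument. Given two policies $z^{(1)},z^{(2)}$, the random variables $Q(z^{(1)},\cdot)$ and $Q(z^{(2)},\cdot)$ are measurable because $Q$ is lsc (hence Borel) by Definition~\ref{def:Multistage} and $\omega\mapsto(z^{(i)}(\omega),\omega)$ is measurable; consequently $A:=\{Q(z^{(1)},\cdot)\le Q(z^{(2)},\cdot)\}$ is a measurable set. The pasted policy $z^{(3)}:=z^{(1)}\one_{A}+z^{(2)}\one_{A^{c}}$ is again a measurable $\mathcal{Z}$-valued policy---here the vector space structure of $\mathcal{Z}$ guarantees that the combination stays in $\mathcal{Z}$---and by construction $Q(z^{(3)},\cdot)=\min\bigl(Q(z^{(1)},\cdot),Q(z^{(2)},\cdot)\bigr)$. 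This establishes downward directedness, and iterating over a finite collection produces exactly the running minima invoked above. I would emphasize that this pasting operation preserves simplicity: if $z^{(1)}$ and $z^{(2)}$ each take only finitely many values on measurable sets, then so does $z^{(3)}$.

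The delicate step is (b), passing from measurable minimizing policies to simple ones while keeping the limit equal to the essential infimum over \emph{all} policies. Since simple policies form a subclass, their essinf a priori only dominates $\essinf_{z(\cdot)\in\mathcal{Z}}Q(z(\cdot),\cdot)$, and the reverse inequality is the crux. Here I would fix a measurable policy $z$, approximate it pointwise by simple $\mathcal{Z}$-valued functions (possible once $\mathcal{Z}$ carries enough structure, e.g.\ separability, for simple approximation), and then control the approximating costs against $Q(z,\cdot)$ through the regularity of $Q$ in its first argument. This is the main obstacle: lower semicontinuity alone yields $\liminf_{n}Q(z_{n},\cdot)\ge Q(z,\cdot)$, which is the wrong direction, so one must exploit the specific form of $Q$ (or an additional upper semicontinuity/continuity in the decision variable) to ensure the simple approximations do not raise the limiting cost. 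Once the two essential infima are shown to coincide, I would apply the directed family theorem directly to the downward directed family of \emph{simple} policies from step (a), obtaining a nonincreasing sequence $Q\bigl(z_{n}(\cdot),\cdot\bigr)$ of simple policies converging almost surely to $\essinf_{z(\cdot)\in\mathcal{Z}}Q(z(\cdot),\cdot)$, which is the assertion.
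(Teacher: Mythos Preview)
Your core argument in step~(a) is exactly what the paper does: it introduces the set $s$ of simple functions, defines the pasted policy
\[
z''(x):=\begin{cases}z(x)&\text{if }Q(z(x),x)\le Q(z'(x),x),\\z'(x)&\text{else},\end{cases}
\]
observes that $z''$ is again simple and that $Q(z''(\cdot),\cdot)\le\min\bigl(Q(z(\cdot),\cdot),Q(z'(\cdot),\cdot)\bigr)$, so that $\{Q(z(\cdot),\cdot):z\in s\}$ is closed under pairwise minimization, and then invokes \citet[Theorem~A.3]{Karatzas} to extract the nonincreasing minimizing sequence. So on the essential mechanism you and the paper agree completely.

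Where you depart from the paper is your step~(b). The paper does \emph{not} carry out any approximation of general measurable policies by simple ones, nor does it invoke any regularity of $Q$ in the decision variable; after the directedness argument it simply writes $\essinf_{z(\cdot)}Q(z(\cdot),\cdot)=\lim_{n}Q(z_{n}(\cdot),\cdot)$ and declares the proof finished. In other words, the paper treats the essential infimum appearing in the statement as the one over the (directed) family of simple policies and stops there. Your concern that the essinf over simple policies might a~priori exceed the essinf over all measurable policies is legitimate, and your observation that lower semicontinuity of $Q$ gives the wrong inequality is correct; the paper just does not engage with this point. The subsequent Corollary and Convention in the paper effectively sidestep the issue by \emph{defining} $\inf_{z\in\mathcal{Z}}Q(z,\cdot)$ to be this essential infimum and by imposing upper semicontinuity in~$x$ (not in~$z$) when an identification with the pointwise infimum over constants is actually needed. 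So your step~(b) is more scrupulous than the paper's own argument, but it is not part of the paper's proof, and the obstacle you flag there is real and would require additional hypotheses on $Q$ to resolve.
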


\begin{proof}
Denote the set of simple functions $z(\cdot)=\sum_{i=1}^{k}a_{i}\one_{A_{i}}(\cdot)$
by $s$. For $z(\cdot)$ and $z^{\prime}(\cdot)$ simple functions
define 
\begin{equation}
z^{\prime\prime}(x):=\begin{cases}
z(x) & \text{if }Q\big(z(x),x\big)\le Q\big(z^{\prime}(x),x\big),\\
z^{\prime}(x) & \text{else},
\end{cases}\label{eq:Max}
\end{equation}
which is a simple function again and measurable. (The maximization~\eqref{eq:Max}
actually defines a directed set or preorder on $s$.) It holds that
$Q(z^{\prime\prime}(\cdot),\cdot)\le Q(z^{\prime}(\cdot),\cdot)$
and $Q(z^{\prime\prime}(\cdot),\cdot)\le Q(z(\cdot),\cdot)$ and the
set $\left\{ Q(z(\cdot),\cdot)\colon z\in s\right\} $ thus is closed
under pairwise minimization. It follows from \citet[Theorem~A.3]{Karatzas}
that there is a sequence $z_{n}(\cdot)$ of simple functions so that
\[
\essinf_{z(\cdot)}Q\big(z(\cdot),\cdot\big)=\lim_{n\to\infty}Q\big(z_{n}(\cdot),\cdot\big)\qquad\text{almost everywhere}
\]
and thus the assertion.
\end{proof}
\begin{cor}
\label{cor:Inf}Let $s$ be a set of policies containing all simple
functions and suppose that 
\begin{equation}
x\mapsto Q(z,x)\label{eq:usc}
\end{equation}
 is upper semi-continuous for every $z\in\mathcal{Z}$. Then there
exists a sequence $z_{n}(\cdot)$ of policies so that 
\[
\lim_{n\to\infty}Q\big(z_{n}(\cdot),\cdot\big)=\inf_{z\in\mathcal{Z}}Q(z,\cdot)\qquad\text{ almost everywhere}.
\]
\end{cor}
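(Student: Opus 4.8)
The plan is to reduce the statement to a countable-infimum approximation and then build the approximating policies explicitly. Write $g(x):=\inf_{z\in\mathcal Z}Q(z,x)$ for the pointwise infimum. First I would record two easy facts: $g$ is upper semi-continuous, being an infimum of the upper semi-continuous functions $x\mapsto Q(z,x)$, and is therefore Borel measurable; and for every policy $z(\cdot)\in s$ one has $g\le Q(z(\cdot),\cdot)$ pointwise, which already yields $g\le\essinf_{z(\cdot)}Q(z(\cdot),\cdot)$ almost everywhere, in the spirit of Proposition~\ref{prop:Essinf}. It therefore remains to produce policies whose cost descends all the way to $g$.

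The main step, and the only place where upper semi-continuity is genuinely used, is a separability reduction. Since $\Xi$ is Polish it is second countable, hence Lindel\"of. For each rational $q$ the sublevel set $\{x:Q(z,x)<q\}$ is open by upper semi-continuity, and $\bigcup_{z\in\mathcal Z}\{x:Q(z,x)<q\}=\{x:g(x)<q\}$. Extracting a countable subcover gives a countable $D_q\subseteq\mathcal Z$ with $\{g<q\}=\bigcup_{z\in D_q}\{Q(z,\cdot)<q\}$. Putting $D:=\bigcup_{q\in\mathbb Q}D_q$, still countable, I expect to obtain $\inf_{z\in D}Q(z,x)=g(x)$ for every $x$: for any rational $q>g(x)$ the point $x$ lies in some $\{Q(z,\cdot)<q\}$ with $z\in D_q\subseteq D$, so $\inf_{z\in D}Q(z,x)\le q$, and letting $q\downarrow g(x)$ finishes it. This Lindel\"of argument is the crux; without upper semi-continuity the sublevel sets need not be open and the countable reduction can fail.

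Finally I would construct the sequence from the countable family $D=\{z_1,z_2,\dots\}$. Define $z_n(\cdot)$ to take, at each $x$, the value $z_k$ with $k\le n$ minimizing $Q(z_k,x)$, breaking ties by the smallest index; this is the pairwise minimization of~\eqref{eq:Max} iterated finitely often, so each $z_n(\cdot)$ is a simple function, measurable because the $Q(z_k,\cdot)$ are, and hence a policy in $s$. By construction $Q(z_n(\cdot),\cdot)=\min_{k\le n}Q(z_k,\cdot)$ is nonincreasing in $n$ and converges pointwise to $\inf_kQ(z_k,\cdot)=g$, which is the assertion (the convergence even holds everywhere, and the case $g(x)=-\infty$ is covered automatically). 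Alternatively, one may simply combine the identity $\essinf_{z(\cdot)}Q(z(\cdot),\cdot)=g$ just established with Proposition~\ref{prop:Essinf}.
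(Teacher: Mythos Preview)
Your argument is correct. The paper's own proof takes the route you sketch in your final sentence: it observes that $\{x:\inf_{z}Q(z,x)<\alpha\}=\bigcup_{z}\{x:Q(z,x)<\alpha\}$ is open by upper semi-continuity, so $g=\inf_{z}Q(z,\cdot)$ is measurable; it then identifies $g$ with $\essinf_{z(\cdot)}Q(z(\cdot),\cdot)$ and invokes Proposition~\ref{prop:Essinf} to produce the approximating sequence.

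Your main route is genuinely different. Instead of appealing to the abstract essential-infimum approximation, you exploit the Lindel\"of property of the Polish space to extract a countable subfamily $D\subseteq\mathcal{Z}$ that already realizes the pointwise infimum, and then you build the simple policies $z_n$ explicitly as the argmin over $\{z_1,\dots,z_n\}$. This buys you a self-contained, constructive proof with pointwise (not merely almost-everywhere) convergence, and it makes transparent exactly where upper semi-continuity enters. The paper's route is shorter but leans on Proposition~\ref{prop:Essinf} and leaves the identity $g=\essinf$ somewhat implicit; your approach is more elementary and more informative about the structure of the approximating sequence.
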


\begin{proof}
The set $s$ contains the constant functions and thus 
\[
\inf_{z\in\mathcal{Z}}Q(z,x)=\inf_{z(\cdot)\in\mathcal{Z}}Q\big(z(x),x\big)\qquad\text{for every }x.
\]
We have that $\left\{ x\colon\inf_{z\in\mathcal{Z}}Q(z,x)<\alpha\right\} =\bigcup_{z\in\mathcal{R}}\left\{ x\colon Q(z,x)<\alpha\right\} $
for every $\alpha\in\mathbb{R}$ so that the additional assumptions
ensure that $x\mapsto\inf_{z\in\mathcal{Z}}Q(z,x)$ is measurable.
The assertion thus follows as 
\[
\inf_{z(\cdot)\in\mathcal{Z}}Q\big(z(\cdot),\cdot\big)=\essinf_{z(\cdot)\in\mathcal{Z}}Q(z(\cdot),\cdot)=\lim_{n\to\infty}Q\big(z_{n}(\cdot),\cdot\big),
\]
where $z_{n}(\cdot)$ is the sequence found in Proposition~\ref{prop:Essinf}.
\end{proof}
\begin{conv}\label{def:Infimum}In what follows we shall always understand
the measurable version when writing $\inf_{z\in\mathcal{Z}}Q(z,\cdot)$,
i.e., we set 
\begin{equation}
\inf_{z\in\mathcal{Z}}Q(z,\cdot):=\essinf_{z(\cdot)\in\mathcal{Z}}Q(z(\cdot),\cdot).\label{eq:Convention}
\end{equation}
\end{conv}

The preceding Corollary~\ref{cor:Inf} provides general conditions
so that the convention is void and automatically valid in these cases. 
\begin{prop}[{Risk functional at the essential infimum, cf.\ \citet[Proposition~6.60]{RuszczynskiShapiro2009}}]
\label{prop:R1}Suppose that $\mathcal{R}$ is continuous at $\inf_{z}Q(z,\cdot)$
with respect to convergence in $L^{p}$. Then it holds that 
\[
\inf_{z(\cdot)\in\mathcal{Z}}\mathcal{R}\big(Q(z(\cdot),\cdot)\big)=\mathcal{R}\left(\inf_{z\in\mathcal{Z}}Q\big(z,\cdot\big)\right).
\]
\end{prop}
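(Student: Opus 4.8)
The plan is to prove the two inequalities $\inf_{z(\cdot)}\mathcal{R}\big(Q(z(\cdot),\cdot)\big)\ge\mathcal{R}\big(\inf_{z}Q(z,\cdot)\big)$ and $\le$ separately. The direction $\ge$ is essentially Proposition~\ref{rem:infimum}, which was stated precisely for this purpose: since $\essinf_{z(\cdot)}Q(z(\cdot),\cdot)\le Q(z^{\prime}(\cdot),\cdot)$ for every fixed policy $z^{\prime}(\cdot)$, monotonicity~\ref{enu:Monotonicity} yields $\mathcal{R}\big(\essinf_{z(\cdot)}Q(z(\cdot),\cdot)\big)\le\inf_{z^{\prime}(\cdot)}\mathcal{R}\big(Q(z^{\prime}(\cdot),\cdot)\big)$. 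Invoking Convention~\ref{def:Infimum} to identify $\inf_{z\in\mathcal{Z}}Q(z,\cdot)$ with the essential infimum gives exactly the $\ge$ half of the asserted equality.

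The reverse inequality $\le$ is where the continuity hypothesis enters. First I would invoke Proposition~\ref{prop:Essinf} to obtain a sequence $z_n(\cdot)$ of (simple) policies with $Q\big(z_n(\cdot),\cdot\big)\searrow\essinf_{z(\cdot)}Q(z(\cdot),\cdot)$ almost surely, the convergence being monotone nonincreasing. By Convention~\ref{def:Infimum} the almost-sure limit equals $\inf_{z\in\mathcal{Z}}Q(z,\cdot)$. The continuity assumption on $\mathcal{R}$ at the point $\inf_z Q(z,\cdot)$ with respect to $L^p$-convergence then lets me pass $\mathcal{R}$ through the limit:
\[
\mathcal{R}\Bigl(\inf_{z\in\mathcal{Z}}Q(z,\cdot)\Bigr)=\lim_{n\to\infty}\mathcal{R}\bigl(Q(z_n(\cdot),\cdot)\bigr).
\]
Since each $z_n(\cdot)$ is an admissible policy, every term on the right dominates $\inf_{z(\cdot)}\mathcal{R}\big(Q(z(\cdot),\cdot)\big)$, and hence so does the limit. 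This establishes $\mathcal{R}\big(\inf_z Q(z,\cdot)\big)\le\inf_{z(\cdot)}\mathcal{R}\big(Q(z(\cdot),\cdot)\big)$, and together with the first part the two quantities coincide.

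The main obstacle is the interplay between the two notions of convergence: Proposition~\ref{prop:Essinf} provides only almost-sure (monotone) convergence, whereas the continuity of $\mathcal{R}$ is posited with respect to $L^p$-convergence. To apply continuity legitimately I must first upgrade the almost-sure convergence to $L^p$-convergence. The monotonicity of the sequence is the key lever here: a nonincreasing sequence $Q\big(z_n(\cdot),\cdot\big)$ bounded below by $\inf_z Q(z,\cdot)\in L^p$ and bounded above by the integrable $Q\big(z_1(\cdot),\cdot\big)$ converges in $L^p$ by dominated convergence, provided one checks the requisite integrability so that the dominating function lies in $L^p$. This step is where the implicit standing assumptions (that the relevant random variables belong to the space $L$ on which $\mathcal{R}$ is defined, e.g.\ $L^p$) must be used; once $L^p$-convergence is secured the continuity hypothesis applies verbatim and the argument closes.
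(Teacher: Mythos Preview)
Your proposal is correct and follows essentially the same approach as the paper: the paper's one-sentence proof invokes precisely the nonincreasing representation from Proposition~\ref{prop:Essinf}, Convention~\ref{def:Infimum}, and Lebesgue's dominated convergence theorem, which is exactly the machinery you spell out (monotonicity for $\ge$, the monotone sequence plus dominated convergence to upgrade a.s.\ to $L^{p}$-convergence and then continuity of $\mathcal{R}$ for $\le$). Your version simply makes explicit the details the paper compresses into a single line.
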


\begin{proof}
The result is a consequence Lebesgue's dominated convergence theorem
in view of our setting~\eqref{eq:Convention} and the representation
as nonincreasing limit given in~\eqref{eq:16}. 
\end{proof}

\subsection{Martingale characterization}

Section~\ref{subsec:Martingale}, in particular Theorem~\ref{thm:Martingale},
characterize the nested distance as a martingale process. This concept
extends to the value process of the stochastic optimization problem
when generalizing the concept of martingales. We incorporate risk
awareness in the definition of the martingale term first and characterize
the optimal solution of the multistage stochastic optimization problem
as a martingale with respect to the risk functionals involved.
\begin{defn}[Risk martingale]
The stochastic process $v=(v_{t})_{t=0}^{T}$ is a \emph{submartingale}
(\emph{supermartingale}, resp.) \emph{with respect to the risk functionals}
$\mathcal{R_{S}}_{t}$ (an $\mathcal{R}$\nobreakdash-submartingale,
for short), if 
\begin{equation}
v_{t}\le\mathcal{R_{S}}_{t+1}(v_{t+1})\text{ a.s.}\qquad(v_{t}\ge\mathcal{R_{S}}_{t+1}(v_{t+1})\text{ a.s., resp.})\label{eq:7-1}
\end{equation}
for very $t\in\{0,1,\dots T\}$. The process $v_{t}$ is an $\mathcal{R}$\nobreakdash-martingale,
if \eqref{eq:7-1} holds with equality. 
\end{defn}

For the expectation, $\mathcal{R}=\mathbb{E}$, the notion of an $\mathcal{R}$\nobreakdash-martingale
(sub-, supermartingale, resp.) coincides with the usual term martingale
(sub-, supermartingale, resp.).
\begin{rem}
\label{rem:Martingale}A process $v=(v_{t})_{t=0}^{T}$, which is
an $\mathcal{R}$\nobreakdash-submartingale, satisfies in addition
\[
v_{s}\le\mathcal{R}_{S_{s+1:t}}(v_{t}),\qquad0\le s<t<T.
\]
This follows as the risk functionals $\mathcal{R}_{\mathcal{S}_{t}}$
are monotone (Axiom~\ref{enu:Monotonicity}) and from the recursive
definition of the nested risk functional given in Definition~\ref{def:NestedRisk}.
\end{rem}

\begin{thm}
Let $z=(z_{t})_{t=0}^{T}$ be an adapted policy. Then the process
\begin{equation}
v_{t}(x_{1:t}):=\mathcal{R}_{S_{t+1:T}}\Bigl(Q\big(z_{0:t}(x_{1:t}),z_{t+1:T}(x_{1:t},\cdot);x_{1:t},\cdot\big)\mid x_{1:t}\Bigr)\label{eq:7-2}
\end{equation}
is an $\mathcal{R}$\nobreakdash-martingale with terminal value
\begin{equation}
v_{T}=Q\big(z_{0:T}(\cdot);\cdot\big).\label{eq:MartEnd}
\end{equation}
\end{thm}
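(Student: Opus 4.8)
The plan is to collapse the conditional cost appearing in~\eqref{eq:7-2} onto the single random variable $Y:=Q\big(z_{0:T}(\cdot);\cdot\big)$ defined on $\Xi$, and then to read off both claims from the recursive structure of the nested functional. First I would use that $z$ is adapted, so that each component $z_s$ depends only on the history $x_{1:s}$. Fixing a fiber $x_{1:t}$, the concatenation of the past decisions $z_{0:t}(x_{1:t})$ with the future decisions $z_{t+1:T}(x_{1:t},x_{t+1:T})$ coincides with $z_{0:T}(x_{1:T})$ for $x_{1:T}=(x_{1:t},x_{t+1:T})$. Hence the random variable inside $\mathcal{R}_{\mathcal{S}_{t+1:T}}(\,\cdot\mid x_{1:t})$ in~\eqref{eq:7-2} is, on this fiber, nothing but the restriction of $Y$, and therefore
\[
v_t(x_{1:t})=\mathcal{R}_{\mathcal{S}_{t+1:T}}\big(Y\mid x_{1:t}\big),\qquad t=0,\dots,T.
\]
This identification is the only place where nonanticipativity of $z$ enters, and keeping the index bookkeeping straight is the main (though routine) obstacle.

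The terminal value is then immediate: for $t=T$ the index range $\mathcal{S}_{T+1:T}$ is empty, so the nested functional degenerates to the identity and $v_T=Y=Q\big(z_{0:T}(\cdot);\cdot\big)$, which is~\eqref{eq:MartEnd}.

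For the $\mathcal{R}$\nobreakdash-martingale property I would peel off the outermost conditional risk functional. Applying the definition~\eqref{eq:5-1} of the nested functional (equivalently, the recursion~\eqref{eq:recursive}) with the split at stage $t+1$ gives
\[
\mathcal{R}_{\mathcal{S}_{t+1:T}}\big(Y\mid x_{1:t}\big)=\mathcal{R}_{\mathcal{S}_{t+1}}\big(\mathcal{R}_{\mathcal{S}_{t+2:T}}(Y\mid x_{1:t+1})\mid x_{1:t}\big).
\]
By the identification obtained in the first paragraph the inner term equals $v_{t+1}(x_{1:t+1})$, whence $v_t=\mathcal{R}_{\mathcal{S}_{t+1}}(v_{t+1})$ almost surely for every $t\in\{0,\dots,T-1\}$; the boundary case $t=T-1$ is included because the empty innermost nesting returns $Y=v_T$. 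This is precisely the defining relation~\eqref{eq:7-1} satisfied with equality, so $v=(v_t)_{t=0}^{T}$ is an $\mathcal{R}$\nobreakdash-martingale, which completes the argument.
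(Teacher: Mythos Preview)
Your argument is correct and follows the same idea as the paper's proof: use adaptedness of $z$ to recognize the integrand as the single random variable $Y=Q\big(z_{0:T}(\cdot);\cdot\big)$, and then invoke the recursive structure of the nested functional. The only difference is organizational: the paper unrolls the recursion explicitly from $t=T$ downwards (computing $v_{T-1}$, then $v_{T-2}$, etc.), whereas you first establish the clean identification $v_t=\mathcal{R}_{\mathcal{S}_{t+1:T}}(Y\mid x_{1:t})$ for all $t$ and then apply~\eqref{eq:recursive} once at a generic stage, which is slightly more concise but equivalent in substance.
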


\begin{proof}
Choosing $t=T$ in the defining equation~\eqref{eq:7-2} gives $v_{T}(x_{1:T})=Q\big(z_{0:T}(x_{1:T});x_{1:T}\big)$
and thus~\eqref{eq:MartEnd}. 

Apply $\mathcal{R}_{\mathcal{S}_{T}}$ and it follows from~\eqref{eq:MartEnd}
that 
\begin{align*}
\mathcal{R}_{S_{T}}\Bigl(v_{T}\mid x_{1:T-1}\Bigr) & =\mathcal{R}_{S_{T}}\Bigl(Q\big(z_{0:T}(x_{1:T});x_{1:T}\big)\mid x_{1:T-1}\Bigr)\\
 & =\mathcal{R}_{S_{T}}\Bigl(Q\big(z_{0:T-1}(x_{1:T-1}),z_{T:T}(x_{1:T-1},\cdot);x_{1:T-1},\cdot\big)\mid x_{1:T-1}\Bigr)\\
 & =v_{T-1}(x_{1:T-1}),
\end{align*}
as $z$ is adapted. This is the desired martingale property for $t=T-1$.

Apply next $\mathcal{R}_{\mathcal{S}_{T-1}}$ to the latter equation
and observe that
\[
v_{T-2}(x_{1:T-2})=\mathcal{R}_{S_{T-1:T}}\Bigl(Q\big(z_{0:T-1}(x_{1:T-1});x_{1:T-1}\big)\mid x_{1:T-2}\Bigr)=\mathcal{R}_{S_{T-1:T}}\Bigl(v_{T-1}(x_{1:T-1})\mid x_{1:T-2}\Bigr),
\]
which is the assertion for $t=T-1$. The general assertion is immediate
by  repeatedly applying the risk functional corresponding to the individual
stage.
\end{proof}
As a consequence we have the following immediate property of an optimal
policy.
\begin{cor}
\label{cor:Optimal}Let $z_{0:T}^{*}\colon\Xi\to\mathcal{Z}_{0:T}$
be an optimal policy in the multistage stochastic optimization problem~\eqref{eq:multiStage}
and $v^{*}=(v_{t}^{*})_{t=0}^{T}$ the value process~\eqref{eq:7-2}
associated with the policy $z_{0:T}^{*}$. Then $v^{*}$ is an $\mathcal{R}$\nobreakdash-martingale
and the starting value $v_{0}^{*}$ is the solution of the optimization
problem~\eqref{eq:multiStage}.
\end{cor}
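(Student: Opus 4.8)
The plan is to read off both assertions directly from the theorem preceding this corollary, which establishes that the value process of \emph{any} adapted policy is an $\mathcal{R}$-martingale. Since $z_{0:T}^{*}$ is by hypothesis an adapted (optimal) policy, it is in particular admissible in that theorem, so the associated process $v^{*}=(v_{t}^{*})_{t=0}^{T}$ defined through~\eqref{eq:7-2} is an $\mathcal{R}$-martingale with no further argument required. This settles the first claim outright.

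For the second claim I would evaluate the defining relation~\eqref{eq:7-2} at the initial stage $t=0$. The substring $x_{1:0}$ is the empty tuple, so that conditioning on $x_{1:0}$ is vacuous and the conditional nested risk functional collapses to the unconditional one; as recorded after the definition of the conditional risk measure, $\mathcal{R}_{\mathcal{S}_{1:T}}(\cdot\mid x_{1:0})$ is then a deterministic number. Consequently
\[
v_{0}^{*}=\mathcal{R}_{\mathcal{S}_{1:T}}\bigl(Q(z_{0:T}^{*}(\cdot);\cdot)\bigr),
\]
that is, the starting value of the process equals the objective functional of~\eqref{eq:multiStage} evaluated at the policy $z_{0:T}^{*}$.

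It remains to invoke optimality of $z_{0:T}^{*}$. Since this policy attains the infimum in~\eqref{eq:multiStage} among all adapted policies, the right-hand side above coincides with $\inf_{z\lhd\mathcal{F}}\mathcal{R}_{\mathcal{S}_{1:T}}\bigl(Q(z(\cdot);\cdot)\bigr)$, the optimal value of the problem; hence $v_{0}^{*}$ is the solution of~\eqref{eq:multiStage}. I do not expect a genuine obstacle here: the martingale property is inherited wholesale from the preceding theorem, and the only point demanding care is the $t=0$ bookkeeping---recognising that the empty-tuple conditioning renders $v_{0}^{*}$ deterministic and identifies it with the objective of Definition~\ref{def:Multistage}.
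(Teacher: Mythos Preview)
Your proposal is correct and matches the paper's approach: the paper states this corollary immediately after the preceding theorem with the words ``As a consequence we have the following immediate property of an optimal policy'' and gives no separate proof, treating both claims as direct consequences of that theorem together with the $t=0$ instantiation of~\eqref{eq:7-2}. Your write-up simply makes these implicit steps explicit, and the bookkeeping you flag (empty-tuple conditioning, optimality of $z^{*}$) is handled correctly.
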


\subsection{The value process is a martingale}

Associated with the optimal solution of the reference problem~\eqref{eq:multiStage}
is an optimal policy. We shall characterize the evolution of this
process now by highlighting their martingale properties. 

\begin{defn}[The value process]
Let $z_{0:T}\colon\Xi\to\mathcal{Z}_{0:T}$ be a policy. The \emph{value
process} associated with the policy $z_{0:T}$ is $v(z):=\big(v_{t}(z)\big){}_{t=0}^{T}$.
The marginal functions $v_{t}(z):=v_{t}(z_{0:t-1}\mid\cdot)\colon\Xi_{0:t}\to\mathbb{R}$
are defined by 
\begin{equation}
v_{t}(z_{0:t-1}\mid x_{1:t}):=\inf_{z_{t:T}\lhd\mathcal{F}_{t:T}}\mathcal{R}_{S_{t+1:T}}\Bigl(Q\big(z_{0:t-1}(x_{1:t}),z_{t:T}(x_{1:t},\cdot);x_{1:t},\cdot\big)\mid x_{1:t}\Bigr),\qquad t=0,\dots T,\label{eq:Value}
\end{equation}
where the infimum in~\eqref{eq:Value} is among all adapted processes
$z_{t:T}(x_{1:T})=\begin{pmatrix}\begin{array}{l}
z_{t}(x_{1},\dots x_{t})\\
\qquad\vdots\\
z_{T}(x_{1},\dots,x_{t},\dots,x_{T})
\end{array}\end{pmatrix}$. 
\end{defn}

\begin{rem}
The value process at initial time $t=0$ is 
\[
v_{0}^{*}:=\inf_{z_{0:T}\lhd\mathcal{F}_{0:T}}\mathcal{R}_{\mathcal{S}_{1:T}}\Bigl(Q\bigl(z_{0:T}(\cdot);\cdot\bigr)\Bigr),
\]
this value coincides with the risk-averse multistage stochastic program~\eqref{eq:multiStage}
given in Definition~\ref{def:Multistage}. The quantity~$v_{0}^{*}$
is a deterministic number and not random. 

In addition, we have for $t=T$ that
\[
v_{T}(z_{0:T-1}\mid x_{1:T})=\inf_{z_{T}\lhd\mathcal{F}_{T}}Q\Big(z_{0:T-1}(x_{1:T}),z_{T}(x_{1:T});x_{1:T}\Big),
\]
so that the terminal value function does not involve a risk measure
any longer and the terminal optimization problem is deterministic,
i.e., not random either. 
\end{rem}

\begin{thm}[Submartingale characterization of the value process]
\label{thm:Submartingale}The value process is an $\mathcal{R}$\nobreakdash-submartingale
for any given policy $z_{0:T}$.
\end{thm}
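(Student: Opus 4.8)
The plan is to prove the one\nobreakdash-step inequality~\eqref{eq:7-1} by first establishing the dynamic programming (Bellman) recursion for the value process and then reading off the submartingale property by substituting the decision prescribed by the given policy at the current stage. Throughout I keep Convention~\ref{def:Infimum} in force, so every infimum over adapted policies is the measurable (essential) infimum, and I abbreviate, for a fixed present decision $z_{t}$ and with $z_{0:t}:=(z_{0:t-1},z_{t})$, the $\mathcal{F}_{t+1}$\nobreakdash-measurable random variable $g_{t+1}(z_{t+1:T}):=\mathcal{R}_{\mathcal{S}_{t+2:T}}\bigl(Q(z_{0:t},z_{t+1:T};\cdot)\mid x_{1:t+1}\bigr)$.

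First I would start from the definition~\eqref{eq:Value} of $v_{t}(z_{0:t-1}\mid x_{1:t})$ and split the infimum over the future decisions $z_{t:T}$ into the present decision $z_{t}$ and the tail $z_{t+1:T}$. Peeling off the outermost risk functional by the recursion~\eqref{eq:recursive}, namely $\mathcal{R}_{\mathcal{S}_{t+1:T}}(\cdot\mid x_{1:t})=\mathcal{R}_{\mathcal{S}_{t+1}}\bigl(\mathcal{R}_{\mathcal{S}_{t+2:T}}(\cdot\mid x_{1:t+1})\mid x_{1:t}\bigr)$, rewrites the value as $v_{t}(z_{0:t-1}\mid x_{1:t})=\inf_{z_{t}}\inf_{z_{t+1:T}}\mathcal{R}_{\mathcal{S}_{t+1}}\bigl(g_{t+1}(z_{t+1:T})\mid x_{1:t}\bigr)$. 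The crucial step is then to interchange the inner infimum over the adapted tail $z_{t+1:T}\lhd\mathcal{F}_{t+1:T}$ with the conditional risk functional $\mathcal{R}_{\mathcal{S}_{t+1}}(\cdot\mid x_{1:t})$: Proposition~\ref{prop:R1}, applied to the family $g_{t+1}(z_{t+1:T})$ indexed by $z_{t+1:T}$, gives $\inf_{z_{t+1:T}}\mathcal{R}_{\mathcal{S}_{t+1}}\bigl(g_{t+1}(z_{t+1:T})\mid x_{1:t}\bigr)=\mathcal{R}_{\mathcal{S}_{t+1}}\bigl(\essinf_{z_{t+1:T}}g_{t+1}(z_{t+1:T})\mid x_{1:t}\bigr)$, and by~\eqref{eq:Value} together with Convention~\ref{def:Infimum} the essential infimum inside is precisely $v_{t+1}(z_{0:t}\mid\cdot)$. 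Combining these identities yields the Bellman recursion $v_{t}(z_{0:t-1}\mid x_{1:t})=\inf_{z_{t}}\mathcal{R}_{\mathcal{S}_{t+1}}\bigl(v_{t+1}\bigl((z_{0:t-1},z_{t})\mid\cdot\bigr)\mid x_{1:t}\bigr)$. The submartingale inequality is now immediate: replacing the infimum over $z_{t}$ by the particular value $z_{t}$ furnished by the given policy only enlarges the right\nobreakdash-hand side, so that $v_{t}(z_{0:t-1}\mid x_{1:t})\le\mathcal{R}_{\mathcal{S}_{t+1}}\bigl(v_{t+1}(z_{0:t}\mid\cdot)\mid x_{1:t}\bigr)$ holds for every $t$, which is~\eqref{eq:7-1}.

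I expect the interchange of the infimum and the conditional risk functional to be the main obstacle. Only the inequality $\mathcal{R}_{\mathcal{S}_{t+1}}\bigl(\essinf_{z_{t+1:T}}g_{t+1}\bigr)\le\inf_{z_{t+1:T}}\mathcal{R}_{\mathcal{S}_{t+1}}\bigl(g_{t+1}\bigr)$ is automatic from the monotonicity axiom~\ref{enu:Monotonicity}, as recorded in Proposition~\ref{rem:infimum}; it is the \emph{reverse} inequality that the submartingale direction genuinely requires, and this one rests on the continuity hypothesis of Proposition~\ref{prop:R1} together with the monotone approximating sequence supplied by Proposition~\ref{prop:Essinf}. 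A secondary, purely technical point is the measurability and adaptedness bookkeeping: fixing $z_{t}$ and optimizing the adapted tail $z_{t+1:T}$ must be compatible with the conditioning in $\mathcal{R}_{\mathcal{S}_{t+1}}(\cdot\mid x_{1:t})$, so that $g_{t+1}(z_{t+1:T})$ is indeed $\mathcal{F}_{t+1}$\nobreakdash-measurable and the essential infimum defining $v_{t+1}$ is attained as a nonincreasing limit. Both are guaranteed by the framework of Corollary~\ref{cor:Inf} and Convention~\ref{def:Infimum}, so no further hypotheses beyond those already standing are needed.
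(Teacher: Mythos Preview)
Your argument is correct and follows essentially the same route as the paper: both proofs interchange the infimum over the adapted tail $z_{t+1:T}$ with the conditional risk functional $\mathcal{R}_{\mathcal{S}_{t+1}}(\cdot\mid x_{1:t})$ via Proposition~\ref{prop:R1} and then obtain the submartingale inequality by comparing the infimum over $z_{t:T}$ with the restricted infimum where $z_{t}$ is fixed to the given policy's value. Your observation that it is the \emph{nontrivial} direction of the interchange (i.e., $\mathcal{R}_{\mathcal{S}_{t+1}}(\essinf g_{t+1})\ge\inf\mathcal{R}_{\mathcal{S}_{t+1}}(g_{t+1})$) that the submartingale inequality genuinely requires is accurate and worth making explicit, as the paper's proof invokes this step only implicitly through Convention~\ref{def:Infimum}.
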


\begin{proof}
We have that 
\begin{align}
\mathcal{R}_{\mathcal{S}_{t+1}}\left(v_{t+1}\right) & =\mathcal{R}_{\mathcal{S}_{t+1}}\left(\inf_{z_{t+1:T}\lhd\mathcal{F}_{t+1:T}}\mathcal{R}_{S_{t+2:T}}\Bigl(Q\big(z_{0:t}(x_{1:t+1}),z_{t+1:T}(x_{1:t+1},\cdot);x_{1:t+1},\cdot\big)\mid x_{1:t+1}\Bigr)\right)\nonumber \\
 & =\inf_{z_{t+1:T}\lhd\mathcal{F}_{t+1:T}}\mathcal{R}_{\mathcal{S}_{t+1}}\left(\mathcal{R}_{S_{t+2:T}}\Bigl(Q\big(z_{0:t}(x_{1:t+1}),z_{t+1:T}(x_{1:t+1},\cdot);x_{1:t+1},\cdot\big)\mid x_{1:t+1}\Bigr)\right)\label{eq:19-1}\\
 & =\inf_{z_{t+1:T}\lhd\mathcal{F}_{t+1:T}}\mathcal{R}_{\mathcal{S}_{t+1:T}}\Bigl(Q\big(z_{0:t}(x_{1:t+1}),z_{t+1:T}(x_{1:t+1},\cdot);x_{1:t+1},\cdot\big)\mid x_{1:t+1}\Bigr),\label{eq:20}
\end{align}
where we have employed~\eqref{eq:Convention} in~\eqref{eq:19-1}. 

The result follows now, as that value process~\eqref{eq:Value} is
the infimum among all $z_{t:T}\lhd\mathcal{F}_{t:T}$, while the infimum
in~\eqref{eq:20} is among $z_{t:T}\lhd\mathcal{F}_{t:T}$, which
is one dimension less.
\end{proof}
Dynamic optimization employs verification theorems which give sufficient
conditions for a solution to the optimal control problem, cf.\ \citet[Theorems~5.1 and 5.2]{Fleming1993}.
The following theorem provides the corresponding statement for the
risk-averse multistage stochastic problem. 
\begin{thm}[Martingale characterization, dynamic equations, verification theorem]
For the value process it holds that 
\[
v_{0}^{*}=\inf_{z_{0:t}\lhd\mathcal{F}_{0:t}}\mathcal{R}_{\mathcal{S}_{1:t}}\bigl(v_{t}(z_{0:t})\bigr),\qquad t\in\{0,1,\dots,T\}.
\]
More generally, for $s<t$ we have the recursive equations  
\begin{equation}
v_{s}(z_{0:s-1})=\inf_{z_{s:t}\lhd\mathcal{F}_{s:t}}\mathcal{R}_{\mathcal{S}_{s+1:t}}\bigl(v_{t}(z_{1:s},z_{s+1:t})\bigr).\label{eq:17}
\end{equation}
\end{thm}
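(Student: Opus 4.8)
The plan is to prove the general recursion~\eqref{eq:17} directly; the first displayed identity then follows by specializing to $s=0$ (the head $z_{0:s-1}$ being empty), with $\mathcal{R}_{\mathcal{S}_{1:t}}$ spanning the stages $1,\dots,t$. The argument rests on two ingredients already established: the tower-type recursion~\eqref{eq:recursive} of the nested risk functional, which breaks $\mathcal{R}_{\mathcal{S}_{s+1:T}}$ into an outer part $\mathcal{R}_{\mathcal{S}_{s+1:t}}$ acting on an inner conditional functional $\mathcal{R}_{\mathcal{S}_{t+1:T}}(\cdot\mid x_{1:t})$, and the interchangeability principle of Proposition~\ref{prop:R1}, which moves an infimum over a block of adapted decisions through a risk functional. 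In spirit this is the risk-averse Bellman principle of optimality: optimizing over the whole horizon equals optimizing the near block while, fiber by fiber, continuing optimally.

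Concretely, I would start from the definition~\eqref{eq:Value} of $v_s(z_{0:s-1}\mid x_{1:s})$ and split the tail policy $z_{s:T}\lhd\mathcal{F}_{s:T}$ into a near block $z_{s:t}$ and a far block $z_{t+1:T}$, writing $\inf_{z_{s:T}}=\inf_{z_{s:t}}\inf_{z_{t+1:T}}$. Applying~\eqref{eq:recursive} to the integrand, the conditional cost at $x_{1:s}$ becomes $\mathcal{R}_{\mathcal{S}_{s+1:t}}\bigl(\mathcal{R}_{\mathcal{S}_{t+1:T}}(Q\mid x_{1:t})\mid x_{1:s}\bigr)$. The far block $z_{t+1:T}$ enters only through the innermost $Q$, hence only through $\mathcal{R}_{\mathcal{S}_{t+1:T}}(\cdot\mid x_{1:t})$; invoking Proposition~\ref{prop:R1} with the outer functional $\mathcal{R}_{\mathcal{S}_{s+1:t}}(\cdot\mid x_{1:s})$ lets me pull $\inf_{z_{t+1:T}}$ inside, so the inner expression turns into $\inf_{z_{t+1:T}}\mathcal{R}_{\mathcal{S}_{t+1:T}}(Q\mid x_{1:t})$. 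By~\eqref{eq:Value} this is exactly the inner value $v_t$ evaluated at the accumulated head $(z_{0:s-1},z_{s:t})$, and taking the remaining infimum over the near block $z_{s:t}$ yields the right-hand side of~\eqref{eq:17}.

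The delicate step is the interchange of $\inf_{z_{t+1:T}}$ with the risk functionals. One inequality is cheap: since $\inf_{z_{t+1:T}}\mathcal{R}_{\mathcal{S}_{t+1:T}}(Q\mid x_{1:t})\le\mathcal{R}_{\mathcal{S}_{t+1:T}}(Q\mid x_{1:t})$ for each fixed far block, the monotonicity embodied in Proposition~\ref{rem:infimum} immediately gives ``$v_s\ge$ right-hand side'' of~\eqref{eq:17} after applying the outer functional and taking the near infimum. The reverse inequality is the real work and is precisely what Proposition~\ref{prop:R1} supplies: it requires continuity of the risk functional together with the essential-infimum reading fixed in Convention~\ref{def:Infimum} (and the approximation of Proposition~\ref{prop:Essinf}), which guarantee both that the inner value $v_t$ is a genuine $\mathcal{F}_t$\nobreakdash-measurable random variable to which $\mathcal{R}_{\mathcal{S}_{s+1:t}}$ may be applied, and that optimizing the far block separately on each fiber $x_{1:t}$ recovers the global infimum---legitimate here because adaptedness lets each $z_j$, $j>t$, depend freely on the already-revealed history $x_{1:t}$. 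Once the interchange is justified for a single split, the full range $t\in\{s+1,\dots,T\}$ follows, and $s=0$ delivers the first claim.
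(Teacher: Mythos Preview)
Your proposal is correct and follows essentially the same route as the paper: both arguments hinge on the tower property~\eqref{eq:recursive} of the nested functional together with the interchange of infimum and risk functional, obtaining one inequality from monotonicity (Proposition~\ref{rem:infimum}) and the reverse from Proposition~\ref{prop:R1}. The only cosmetic difference is that the paper carries out a single step from $t$ to $t-1$ and then iterates via Remark~\ref{rem:Martingale}, whereas you split directly at an arbitrary intermediate stage; the substance is identical.
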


\begin{proof}
Applying the conditional risk functional $\mathcal{R}_{\mathcal{S}_{t}}(\cdot\mid x_{1:t-1})$
to~\eqref{eq:Value} gives
\begin{align}
\mathcal{R}_{\mathcal{S}_{t}}\Big(v_{t}(z_{0:t-1}\mid x_{1:t})| & x_{1:t-1}\Big)=\mathcal{R}_{\mathcal{S}_{t}}\left(\inf_{z_{t:T}\lhd\mathcal{F}_{t:T}}\mathcal{R}_{S_{t+1:T}}\Bigl(Q\big(z_{0:t-1}(x_{1:t}),z_{t:T}(x_{1:t},\cdot);x_{1:t},\cdot\big)\mid x_{1:t}\Bigr)\mid x_{1:t-1}\right)\nonumber \\
 & =\inf_{z_{t:T}\lhd\mathcal{F}_{t:T}}\mathcal{R}_{\mathcal{S}_{t}}\left(\mathcal{R}_{S_{t+1:T}}\Bigl(Q\big(z_{0:t-1}(x_{1:t}),z_{t:T}(x_{1:t},\cdot);x_{1:t},\cdot\big)\mid x_{1:t}\Bigr)\mid x_{1:t-1}\right)\label{eq:18}\\
 & =\inf_{z_{t:T}\lhd\mathcal{F}_{t:T}}\mathcal{R}_{\mathcal{S}_{t:T}}\left(Q\big(z_{0:t-1}(x_{1:t}),z_{t:T}(x_{1:t},\cdot);x_{1:t},\cdot\big)\mid x_{1:t-1}\Bigr)\right),\nonumber 
\end{align}
where we have used the montonicity axiom, \ref{enu:Monotonicity}
and Propositon~\ref{rem:infimum} to obtain ``$\le$'' in~\eqref{eq:18}.
The converse inequality ``$\ge"$ involves the Lebesgue Dominated
Convergence Theorem and is a consequence of Proposition~\ref{prop:R1}.

At this stage take the infimum with respect to $z_{t-1}\lhd\mathcal{F}_{t-1}$
and thus 
\begin{align*}
\inf_{z_{t-1}\lhd\mathcal{F}_{t-1}}\mathcal{R}_{\mathcal{S}_{t}}\left(v_{t}(z_{0:t-1}\mid x_{1:t})|x_{1:t-1}\right) & =\inf_{z_{t-1:T}\lhd\mathcal{F}_{t-1:T}}\mathcal{R}_{\mathcal{S}_{t:T}}\left(Q\big(z_{0:t-1}(x_{1:t}),z_{t:T}(x_{1:t},\cdot);x_{1:t},\cdot\big)\mid x_{1:t-1}\Bigr)\right)\\
 & =v_{t-1}(z_{0:t-2}\mid x_{1:t-1}),
\end{align*}
which is the martingale property of the value process $v(z)$. The
remaining equation~\eqref{eq:17} follows in line with Remark~\ref{rem:Martingale}.

The converse inequalities follow from the submartingale characterization,
Theorem~\ref{thm:Submartingale}. 
\end{proof}
The dynamic equations derived in this section can be employed to characterize
optimal solution of the multistage stochastic optimization problem.
The conceptual advantage lies in the fact that each stage can be considered
for its own. For this the dynamic equations can be employed in algorithms
to improve suboptimal policies at each stage individually. 

\section{\label{sec:Continuity}Continuity of risk-averse multistage programs}

The value of the risk-averse multistage stochastic optimization problem~\eqref{eq:multiStage}
depends on the probability measure $P$. We shall make this explicit
by writing 
\begin{equation}
v_{P}:=\inf_{z_{0:T}\lhd\mathcal{F}_{0:T}}\mathcal{R}_{\mathcal{S}_{1:T};P}\Bigl(Q\bigl(z_{0:T}(\cdot);\cdot\bigr)\Bigr).\label{eq:7-3}
\end{equation}
It is known that the \emph{risk-neutral} version of the multistage
problem~\eqref{eq:7-3} is continuous with respect to changing the
probability measure. 

The following main result elaborates continuity of the \emph{risk-averse}
problem with respect to the nested distance and gives the modulus
of continuity explicitly. 
\begin{thm}[Continuity of the risk-averse MSO problem]
\label{thm:47}Suppose that
\[
x\mapsto Q(z;x),\qquad z\in\mathcal{Z},
\]
 is uniformly Lipschitz, i.e., 
\begin{equation}
\left|Q(z;x)-Q(z;y)\right|\le L\cdot d(x,y)\quad\text{ for all }x,y\in\Xi_{1:T}\text{ and }z\in\mathcal{Z}\label{eq:Lipschitz}
\end{equation}
and
\[
z\mapsto Q(z;x)\qquad(x\in\Xi_{1:T})
\]
is convex for every $x$ fixed. Then the risk-averse optimization
problem~\eqref{eq:multiStage} is continuous with respect to changing
the probability measure. More specifically, we have that 
\[
\left|v_{P}-v_{\tilde{P}}\right|\le\sup_{\sigma\in\mathcal{S}_{t},\,t=1,\dots T}\left\Vert \sigma_{1}\right\Vert _{q}\cdot\dots\left\Vert \sigma_{T}\right\Vert _{q}\cdot L\cdot\nd_{r}(P,\tilde{P}),
\]
where the exponents $r$ and $q$ are Hölder conjugates, $\frac{1}{r}+\frac{1}{q}=1$.
\end{thm}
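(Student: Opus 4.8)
The plan is to bound $\left|v_P-v_{\tilde P}\right|$ by proving the one-sided estimate $v_P\le v_{\tilde P}+C\cdot L\cdot\nd_r(P,\tilde P)$ with $C:=\sup_{\sigma_t\in\mathcal{S}_t,\,t=1,\dots,T}\left\Vert\sigma_1\right\Vert_q\cdots\left\Vert\sigma_T\right\Vert_q$, and then interchanging the roles of $P$ and $\tilde P$. The obstruction to applying Theorem~\ref{thm:Composition} directly is that for an adapted policy $z(\cdot)$ the random variable $x\mapsto Q\big(z(x);x\big)$ need not be Lipschitz, since the policy itself need not be continuous; the hypothesis~\eqref{eq:Lipschitz} controls $Q$ only in its second argument for a \emph{fixed} decision. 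Convexity of $z\mapsto Q(z;x)$ is exactly what repairs this: it permits transporting a near-optimal policy from the $\tilde P$-world to the $P$-world along the optimal coupling and averaging the transported decisions without increasing the cost.

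Concretely, fix $\varepsilon>0$ and choose an adapted policy $\tilde z$ with $\mathcal{R}_{\mathcal{S}_{1:T};\tilde P}\big(Q(\tilde z(\cdot);\cdot)\big)\le v_{\tilde P}+\varepsilon$. Let $\pi$ be the optimal bivariate measure~\eqref{eq:9}, which attains $\nd_r(P,\tilde P)$ and whose conditional marginals satisfy~\eqref{eq:44}\textendash\eqref{eq:45}; write $(X,Y)\sim\pi$. These conditional marginals make $\pi$ bicausal, so $Y_{1:t}$ and $X_{t+1:T}$ are conditionally independent given $X_{1:t}$. Define a transported $P$-policy by conditional averaging, $z_t(x_{1:t}):=\E_\pi\big[\tilde z_t(Y_{1:t})\mid X_{1:t}=x_{1:t}\big]$; this is $\mathcal{F}_t$-measurable, hence adapted, well defined once each $\mathcal{Z}_t$ is convex, and by bicausality it equals $\E_\pi\big[\tilde z_t(Y_{1:t})\mid X_{1:T}\big]$. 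Consequently Jensen's inequality together with the convexity of $Q$ in $z$ gives, $\pi$-almost surely, $Q\big(z(X);X\big)\le\E_\pi\big[Q\big(\tilde z(Y);X\big)\mid X_{1:T}\big]$, while the uniform Lipschitz bound~\eqref{eq:Lipschitz} gives $Q\big(\tilde z(Y);X\big)\le Q\big(\tilde z(Y);Y\big)+L\,d(X,Y)$.

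It then remains to propagate these two pointwise bounds through the nested risk functional. For this I would rerun the backward induction from the proof of Theorem~\ref{thm:Composition}, but now comparing $Q(z;\cdot)$ under $P$ with $Q(\tilde z;\cdot)$ under $\tilde P$ along the coupled fibers of $\pi$: because~\eqref{eq:44}\textendash\eqref{eq:45} make $\pi$ compatible with the coordinate filtration, the stagewise conditional risk measure $\mathcal{R}_{\mathcal{S}_t;P(\cdot\mid x_{1:t-1})}$ lifts to $\pi$ and is compared with $\mathcal{R}_{\mathcal{S}_t;\tilde P(\cdot\mid y_{1:t-1})}$ at the coupled states, the decision mismatch being absorbed at each stage by the convexity/Jensen step. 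Applying Proposition~\ref{prop:Spectal} (with $\beta=1$) at each of the $T$ stages contributes one factor $\sup_{\sigma_t}\left\Vert\sigma_t\right\Vert_q$ and replaces the terminal displacement $L\,d(X,Y)$ by the cost process, and Theorem~\ref{thm:Martingale} identifies the accumulated displacement, after nesting, with $L\cdot\nd_r(P,\tilde P)$, exactly as the telescoping $c_0^r=\E_\pi d^r$ does in the corollary following Theorem~\ref{thm:Martingale}. This yields $\mathcal{R}_{\mathcal{S}_{1:T};P}\big(Q(z;\cdot)\big)\le\mathcal{R}_{\mathcal{S}_{1:T};\tilde P}\big(Q(\tilde z;\cdot)\big)+C\cdot L\cdot\nd_r(P,\tilde P)$; taking the infimum over adapted policies on the left, using $\mathcal{R}_{\mathcal{S}_{1:T};\tilde P}(Q(\tilde z;\cdot))\le v_{\tilde P}+\varepsilon$, and letting $\varepsilon\to0$ gives $v_P\le v_{\tilde P}+C\cdot L\cdot\nd_r(P,\tilde P)$. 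Interchanging $P$ and $\tilde P$ finishes the estimate.

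The step I expect to be the main obstacle is this propagation through the nesting while keeping the two policies aligned along $\pi$: one must verify that the conditional averaging commutes with the stagewise conditioning (which is precisely the bicausality encoded in~\eqref{eq:44}\textendash\eqref{eq:45}), that the interchange of infimum and conditional risk measure used to pass to the value functions is licensed by Proposition~\ref{prop:R1} (whose continuity hypothesis is secured by~\eqref{eq:Lipschitz} and convexity), and that the cost process telescopes to $\nd_r(P,\tilde P)$ carrying the correct stagewise constants $\left\Vert\sigma_t\right\Vert_q$. By contrast, the Jensen step and the displacement estimate are routine; the bookkeeping of the nested recursion is where the care is needed.
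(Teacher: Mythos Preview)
Your proposal is correct and follows essentially the same strategy as the paper: transport a near-optimal policy along the optimal (bicausal) coupling~$\pi$ via conditional averaging, invoke Jensen's inequality from the convexity of $Q(\cdot;x)$ to absorb the policy mismatch, and then propagate the Lipschitz displacement through the nested risk functionals by the backward induction underlying Theorem~\ref{thm:Composition}/Proposition~\ref{prop:Spectal}, collecting one factor $\left\Vert\sigma_t\right\Vert_q$ per stage before the cost process telescopes to $\nd_r(P,\tilde P)$. The only cosmetic differences are the direction of transport (the paper starts from a near-optimal $P$-policy and pushes it to $\tilde P$, you do the reverse) and that you make the bicausality needed for the Jensen/conditioning step more explicit than the paper does.
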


\begin{rem}
The assumption on Lipschitz continuity of the function $Q$ notably
insures the Convention~\ref{def:Infimum} as $Q$ is particularly
usc., cf.~\eqref{eq:usc}.
\end{rem}

\begin{proof}[Proof of Theorem~\ref{thm:47}]
To compare with the second problem $v_{\tilde{P}}$ define the new
policy 
\[
\tilde{z}_{t}(y_{1:t}):=\E_{\pi}\bigl(z_{t}(x)\mid\pr_{t}(x,y)=y_{1:t}\bigr),
\]
where $\pr_{t}(x_{1:t},y_{1:t}):=y_{1:t}$ is the projection onto
the second marginal and consider the specific random variables 
\begin{equation}
Y_{t}(x_{1:t}):=\mathcal{R}_{\mathcal{S}_{t+1:T};P(\cdot\mid x_{1:t})}\Big(Q\big(z_{0:t},z_{t+1:T}(x_{1:t},\cdot);x_{1:t},\cdot\big)\Big)\label{eq:53}
\end{equation}
 and 
\begin{equation}
\tilde{Y}_{t}(y_{1:t}):=\mathcal{R}_{\mathcal{S}_{t+1:T};\tilde{P}(\cdot\mid y_{1:t})}\Bigl(Q\big(z_{0:t},\tilde{z}_{t+1:T}(y_{1:t},\cdot);y_{1:t},\cdot\big)\Bigr),\label{eq:54}
\end{equation}
where $z_{0:t}\in\mathcal{Z}$ is fixed and `$\cdot$' indicates the
random component. 

For $\varepsilon>0$ pick a policy $z=\bigl(z_{0:t}(x_{1:t})\bigr)_{t=1}^{T}$
so that 
\begin{equation}
v_{P}>\mathcal{R}_{\mathcal{S}_{1:T};P}\bigl(Q(z_{0:T}(\cdot);\cdot)\bigr)-\varepsilon.\label{eq:Gewahlt}
\end{equation}
Further, let the measure $\pi(\cdot,\cdot)$ have conditional marginals
$P(\cdot)$ and $\tilde{P}(\cdot)$ with respect to the nested distance,
cf.~\eqref{eq:9}. 

In line with Definition~\ref{def:Nested} we set $c_{T}:=d$ and
proceed by backwards induction from $t=T$ down to $t=0$. 

\medskip{}

Base case: Note that $Y_{T}(x_{1:T})=Q(z_{0:T};x_{1:T})$ and $\tilde{Y}_{T}(y_{1:T})=Q(z_{0:T};y_{1:T})$.
By Lipschitz continuity~\eqref{eq:Lipschitz} it holds that $\tilde{Y}_{T}(y_{1:T})-Y_{T}(x_{1:T})\le L\cdot d(x_{1:T},y_{1:T})$.
This is the statement 
\begin{equation}
\tilde{Y}_{t}(y_{1:t})-Y_{t}(x_{1:t})\le L\cdot\sup_{\sigma\in\mathcal{S}_{t+1:T}}\left\Vert \sigma_{t+1}\right\Vert _{q}\cdot\ldots\left\Vert \sigma_{T}\right\Vert _{q}\cdot c_{t}(x_{1:t},y_{1:t})\label{eq:17-2}
\end{equation}
for the case $t=T$ (and by setting the empty product to $\prod_{t\in\emptyset}f_{t}:=1$). 

\medskip{}

Inductive step: In what follows we shall employ the statement~\eqref{eq:17-2}
as induction hypothesis and deduce the statement for~$t-1$ instead
of $t$. From Jensen's inequality we infer that 
\begin{equation}
Q\left(\tilde{z}(y);y\right)=Q\left(\E_{\pi}\bigl(z(x)\mid\pr_{t}(x,y)=y\bigr);y\right)\le\E_{\pi}\left(Q\bigl(z(x);y)\mid\pr(x,y)=y\right).\label{eq:7}
\end{equation}
To be more specific we emphasize that $z$ is a vector of functions,
$z=(z_{t})_{t=0}^{T}$ and further, each $z_{t}$ is a function of
the variables $x_{1},\dots,x_{t}$, $z_{t}=z_{t}(x_{1:t})$. Jensen's
inequality applies to each function $z_{t}$ and each argument $x_{t}$
separately, so that the inequality~\eqref{eq:7} is actually the
result of applying Jensen's inequality $t$ times repeatedly at each
stage $t$.

Now let $\zeta$ be chosen so that $\E\tilde{Y}_{t}\zeta>\mathcal{R}_{\mathcal{S}_{t:T};\tilde{P}(\cdot\mid y_{1:t-1})}(\tilde{Y}_{t})-\varepsilon^{\prime}$
and $\AVaR_{\alpha}(\zeta)\le\frac{1}{1-\alpha}\int_{\alpha}^{1}\sigma(u)\mathrm{d}u$
($\alpha\in(0,1)$) for some $\sigma(\cdot)\in\mathcal{S}_{t}$. As
the risk functional is recursive we deduce from~\eqref{eq:53} and~\eqref{eq:54}
that 
\begin{align*}
\tilde{Y}_{t-1}-Y_{t-1}-\varepsilon^{\prime} & =\mathcal{R}_{\mathcal{S}_{t:T};\tilde{P}(\cdot\mid y_{1:t-1})}(\tilde{Y}_{t})-\varepsilon^{\prime}-\mathcal{R}_{\mathcal{S}_{t:T};P(\cdot\mid x_{1:t-1})}(Y_{t})\\
 & \le\E_{\pi}Q\big(\tilde{z}(y);y\big)\zeta(y)-\E_{\pi}Q\big(z(x);x\big)\zeta(y)\\
 & \le\E_{\pi}\E_{\pi}\left(Q\bigl(z(x);y)\mid\pr(x,y)=y\right)\zeta(y)-\E_{\pi}Q\left(z(x);x\right)\zeta(y),
\end{align*}
where we have used~\eqref{eq:7}. By the tower property of the conditional
expectation, Lipschitz continuity~\eqref{eq:Lipschitz} and Hölder's
inequality it follows further that 
\begin{align*}
\tilde{Y}_{t-1}-Y_{t-1}-\varepsilon^{\prime} & \le\E_{\pi}Q\bigl(z(x);y)\zeta(y)-\E_{\pi}Q\left(z(x);x\right)\zeta(y)\\
 & \le\E_{\pi}\zeta(y)c_{t}(x_{1:t},y_{1:t})\\
 & \le L\sup_{\sigma\in\mathcal{S}_{t:T}}\left\Vert \sigma_{t}\right\Vert _{q}\cdot\ldots\left\Vert \sigma_{T}\right\Vert _{q}w_{r}\left(P(\cdot\mid x_{1:t-1}),\tilde{P}(\cdot\mid y_{1:t-1});\,c_{t}\right)\\
 & =L\sup_{\sigma\in\mathcal{S}_{t:T}}\left\Vert \sigma_{t}\right\Vert _{q}\cdot\ldots\left\Vert \sigma_{T}\right\Vert _{q}\cdot c_{t}(x_{1:t-1},y_{1:t-1}),
\end{align*}
as $\pi$ has conditional marginals $\tilde{P}(\cdot\mid y_{1:t-1})$
and $P(\cdot\mid x_{1:t-1})$. By letting $\varepsilon^{\prime}\to0$
we get the assertion~\eqref{eq:17-2} for $t-1$. By repeatedly applying
the previous reasoning we thus get that
\begin{align}
\tilde{Y}_{0}-Y_{0} & \le L\sup_{\sigma\in\mathcal{S}_{1:T}}\left\Vert \sigma_{1}\right\Vert _{q}\cdot\ldots\left\Vert \sigma_{T}\right\Vert _{q}\cdot\nd_{r}(P,\tilde{P}).\label{eq:17-4}
\end{align}
Now note that $v_{P}>Y_{0}-\varepsilon$ by~\eqref{eq:Gewahlt} and
we thus have found a policy $\tilde{z}$ so that $v_{\tilde{P}}\le\tilde{Y}_{0}.$
It follows with~\eqref{eq:17-4} that 
\[
v_{\tilde{P}}-v_{P}\le\tilde{Y}_{0}-\left(Y_{0}-\varepsilon\right)\le L\sup_{\sigma\in\mathcal{S}_{1:T}}\left\Vert \sigma_{1}\right\Vert _{q}\cdot\ldots\left\Vert \sigma_{T}\right\Vert _{q}\cdot\nd_{r}(P,\tilde{P})+\varepsilon.
\]

The result finally follows by letting $\varepsilon\to0$ and by interchanging
the role of $P$ and $\tilde{P}$.
\end{proof}

\section{Summary }

This paper addresses risk-averse stochastic optimization problems.
To define the risk functionals based on partial observations we introduce
conditional risk measures first. They are defined on fibers and can
be composed to nested risk measures. We demonstrate that these nested
risk measures are continuous and we establish the modulus of continuity.
As a consequence, the optimization problems are continuous as well,
these problems inherit the modulus of continuity from the risk functionals.

All results come along with characterizations as generalized martingales.
It is demonstrated that the underlying distance is a usual martingale
with respect to the natural filtration. The value functions are shown
to follow a generalized, risk-averse martingale pattern as well. 

\section{Acknowledgment}

We would like to thank Prof.\ Shapiro for proposing to elaborate
the continuity relations of nested risk measures with respect to the
nested distance.

\bibliographystyle{abbrvnat}
\bibliography{0_HOME1_users_personal_rubsc_Dropbox_LiteraturAlois2}

\end{document}